\documentclass[11pt,reqno]{amsart}
\bibliographystyle{plain}
\usepackage{amsmath}
\usepackage{amsfonts}

\numberwithin{equation}{section}
\setcounter{secnumdepth}{2}
\setcounter{tocdepth}{2}

\setlength{\parskip}{1.5ex}

\usepackage{times}
\usepackage{amsmath,amsfonts,amstext,amssymb,amsbsy,amsopn,amsthm,eucal}
\usepackage{txfonts}
\usepackage{dsfont}
\usepackage{graphicx}   
\usepackage{hyperref}
\usepackage{accents}
\usepackage{enumerate}
\usepackage{xcolor}

\usepackage{verbatim}


\setlength{\textheight}{8.50in} \setlength{\textwidth}{6.5in}
\setlength{\columnsep}{0.5in} \setlength{\topmargin}{0.0in}
\setlength{\headheight}{0in} \setlength{\headsep}{0.5in}
\setlength{\parindent}{1pc}
\setlength{\oddsidemargin}{0in}  
\setlength{\evensidemargin}{0in}

\newcommand{\Ric}{{\rm Ric}}

\newcommand{\Vol}{{\rm Vol}}

\newcommand{\cH}{\mathcal{H}}

\newtheorem{theorem}{Theorem}[section]

\newtheorem{lemma}[theorem]{Lemma}

\theoremstyle{definition}
\newtheorem{definition}[theorem]{Definition}
\theoremstyle{remark}
\newtheorem{remark}{Remark}[section]
\theoremstyle{remark}

\theoremstyle{remark}

\theoremstyle{remark}
\theoremstyle{remark}

\begin{document}

\title{On the scalar curvature rigidity for mainifolds with non-positive Yamabe invariant}
\date{\today}

\author{Huaiyu Zhang}
\address{H. Zhang,~~School of mathematics and statistcs,  Nanjing University of Science and Techlonogy}
\email{zhymath@outlook.com}

\author{Jiangwei Zhang}
\address{J. Zhang,~~School of mathematical Sciences,  Zhejiang University}
\email{zhangjiangwei@zju.edu.cn}

\begin{abstract}

In this paper, we study scalar curvature rigidity of non-smooth metrics on smooth manifolds with non-positive Yamabe invariant. We prove that if  the  scalar curvature is not less than the Yamabe invariant in distributional sense, then the manifold must be  isometric to an Einstein manifold. This result extends Theorem 1.4 in Jiang, Sheng and the first author (Sci. China Math. 66 (2023) no. 6, 1141-1160), from a special case where the manifolds have zero Yamabe invariant to general cases where the manifolds have non-positive Yamabe invariant.  This result depends highly on an analysis and estimates of geometric evolution equations.

\end{abstract}
\maketitle
\tableofcontents

\section{Introduction}
Low-regularity geometry with weak curvature conditions has been appearing as an important theme in Riemannian geometry. 
Sectional curvature, Ricci curvature and scalar curvature are the most fundamental and the most important curvatures in Riemannian geometry. For  sectional curvature lower bounds, Gromov, Perelman, etc., developed the  Alexandrov spaces theory, which has great applications in the solvation of Poincar\'e conjecture, see \cite{BGP92}, \cite{MT07}, etc.
For  Ricci curvature  lower bounds, there is a profound theory developed by Cheeger, Colding, Tian, etc., see \cite{CC1}, \cite{CN1}, \cite{Ti15} \cite{JN21}, \cite{CJN21}, etc. An another theory for  Ricci curvature  lower bounds was  developed by Lott, Villani, Sturm, etc., via an optimal transport approach, see \cite{LV},  \cite{St1}.

However, for scalar curvature lower bounds, it has not been well understood. Gromov suggested people to study scalar curvature lower bounds in weak sense, see \cite[Page 1118]{Gm}. And he pointed out that one could consider weak scalar curvature in distributional sense (see \cite[Page 1118, Line 11 from below]{Gm}).  An anothoer notion of weak scalar curvature lower bounds is developed by by using Ricci flow (see \cite{Bm} and \cite{PBG}). Jiang, Sheng and the first author tied the notion developed by Bamler and Burkhardt-Guim to the notion of distributional scalar curvature in \cite{JSZ23}. In \cite{LL15}, Lee-LeFloch proved a positive mass theorem for distributional scalar curvature. In \cite{JSZ22}, Jiang, Sheng and the first author improved some of the results in \cite{LL15}.  the first author partly solved the Yamabe problem for distributional scalar curvature in \cite{Zh23}. For more work in this topic, see \cite{LM07}, \cite{LS15}, \cite{SW11}, \cite{LS21}, \cite{McSz}, \cite{Miao2002}, \cite{ST02}, \cite{LNN23}, etc.

Particularly, Schoen wanted to generalize the following theorem,  involving scalar curvature lower bounds, from smooth metrics to non-smooth metrics:
\begin{theorem}[\cite{KW75}, \cite{Sc89}]\label{thmY222}
Let $M^n$ be a compact manifold with $\sigma(M)\le0$, where $\sigma(M)$ is the Yamabe invariant of $M$, and $g$ be a  smooth metric on $M$ with unit volume  such that $R_g\ge \sigma(M)$ pointwisely on $M$. Then $g$ is Einstein with $R_g=\sigma(M)$.
\end{theorem}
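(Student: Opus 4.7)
The plan is to first identify $R_g$ as identically the constant $\sigma(M)$, and then to upgrade this constancy to the Einstein condition through the first variation of $\sigma(M)$ in the metric. I split on the sign of $\sigma(M)$. If $\sigma(M) < 0$, let $\tilde g = u^{4/(n-2)} g$ be the unique unit-volume constant-scalar-curvature representative of the conformal class $[g]$; $u > 0$ solves the Yamabe equation
\begin{equation*}
-\tfrac{4(n-1)}{n-2}\Delta_g u + R_g u = Y([g])\, u^{(n+2)/(n-2)},
\end{equation*}
with $R_{\tilde g} = Y([g]) \le \sigma(M) < 0$ and $\int_M u^p\, dV_g = 1$ for $p = 2n/(n-2)$. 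At a maximum point of $u$, $\Delta_g u \le 0$ gives $R_g(x_{\max}) \le Y([g])\, u_{\max}^{p-2}$, which combined with $R_g \ge \sigma(M) \ge Y([g])$ and $Y([g]) < 0$ forces $u_{\max}^{p-2} \le 1$, i.e., $u \le 1$ everywhere. The normalization $\int u^p\, dV_g = 1 = \int 1\, dV_g$ together with $u \le 1$ then forces $u \equiv 1$, so $g = \tilde g$ and $R_g \equiv Y([g]) = \sigma(M)$. If instead $\sigma(M) = 0$, the hypothesis gives $R_g \ge 0$; were $R_g > 0$ anywhere, the first eigenvalue of the conformal Laplacian $L_g = -\frac{4(n-1)}{n-2}\Delta_g + R_g$ would be strictly positive, yielding $Y([g]) > 0$ and contradicting $\sigma(M) = 0$. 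Hence $R_g \equiv 0$ as well.

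Now $g$ is a unit-volume metric with constant scalar curvature $R_g \equiv \sigma(M)$, and testing $f \equiv 1$ in the Yamabe functional shows that $g$ realizes $Y([g]) = \sigma(M) = \sup_C Y(C)$. For any smooth family $g_s = g + s h$, we have $Y([g_s]) \le \sigma(M) = Y([g_0])$, so $s = 0$ is a maximum of $s \mapsto Y([g_s])$ and $\frac{d}{ds} Y([g_s])|_{s=0} = 0$. By the envelope theorem applied to the Yamabe minimizer $f \equiv 1$, this derivative equals $\frac{d}{ds} F(1, g_s)|_{s=0}$, where $F$ is the Yamabe functional. Using the standard first-variation formulas $\partial_s R_{g_s}|_{s=0} = -\Delta(\tr h) + \delta\delta h - \langle \Ric_g, h\rangle$ and $\partial_s dV_{g_s}|_{s=0} = \tfrac{1}{2}(\tr h)\, dV_g$, and recalling $R_g$ is the constant $\sigma(M)$, the stationarity reduces to
\begin{equation*}
\int_M \langle \Ric_g - \tfrac{R_g}{n}\, g,\ h \rangle\, dV_g = 0 \quad \text{for every symmetric 2-tensor } h,
\end{equation*}
forcing $\Ric_g = (R_g/n)\, g$. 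That is, $g$ is Einstein with $R_g = \sigma(M)$.

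The main obstacle lies in the envelope-theorem step: one must show that the minimizer $f \equiv 1$ persists as a smooth family of minimizers of the Yamabe functional on nearby conformal classes, which in the smooth negative case follows from the implicit function theorem applied to the Yamabe equation but becomes delicate when $\sigma(M) = 0$ because the minimum is degenerate (every positive constant is a minimizer). The paper's emphasis on ``analysis and estimates of geometric evolution equations'' suggests the authors bypass this variational framework entirely, especially in view of the distributional setting of their main theorem: one runs normalized Ricci flow starting from $g$ and uses monotonicity of $\min_M R$, coupled with quantitative estimates on the traceless Ricci tensor $\Ric_g - (R_g/n)\, g$ derived from the evolution equation for $|\Ric - (R/n)g|^2$, to extract the Einstein conclusion without any variational input.
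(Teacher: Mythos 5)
First, a point of orientation: the paper does not prove this theorem at all. It is quoted verbatim from Kazdan--Warner and Schoen and used as a black box (it is applied in Step B of the proof of Theorem \ref{thmY2'} to the smooth unit-volume metrics $\mathring g(t)$ along the normalized flow). So your closing speculation that the authors establish it via normalized Ricci flow and monotonicity of $\min_M R$ is off the mark -- the ``geometric evolution equations'' in the paper serve only to reduce the distributional statement to this classical smooth one. Your proposal should therefore be judged as a free-standing reconstruction of the classical argument, and most of it is correct. Step 1 is complete and clean: in the case $\sigma(M)<0$ the maximum-principle comparison at $x_{\max}$ together with the volume normalization does force $u\equiv 1$ and hence $R_g\equiv Y([g])=\sigma(M)$; in the case $\sigma(M)=0$ the eigenvalue argument for the conformal Laplacian correctly rules out $R_g>0$ anywhere. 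Step 2 in the case $\sigma(M)<0$ is also sound: since $Y([g_s])\le\sigma(M)<0$ for every $s$, the Yamabe minimizer in $[g_s]$ is unique, nondegenerate, and depends smoothly on $s$ by the implicit function theorem, so the envelope computation is legitimate and the stationarity of the normalized total scalar curvature does yield $\Ric_g=\frac{R_g}{n}g$.

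The genuine gap is exactly where you flag it: the Einstein conclusion when $\sigma(M)=0$. There you have only established $R_g\equiv 0$, and the envelope argument breaks down because the linearized Yamabe operator $-\frac{4(n-1)}{n-2}\Delta_g$ has constants in its kernel, so you cannot differentiate the family of minimizers. Acknowledging the difficulty is not the same as closing it, and the scalar-flat case is precisely the case already treated by \cite{JSZ23}, so it cannot be waved away. The classical repair (Bourguignon; see also Besse, \emph{Einstein Manifolds}, 4.47--4.49, or \cite{Sc89}) avoids the envelope theorem entirely: if $R_g\equiv 0$ but $\Ric_g\not\equiv 0$, take the variation $h=-\Ric_g$; the contracted second Bianchi identity gives $\delta\Ric_g=-\frac12 dR_g=0$ and $\tr_g\Ric_g=0$, so the first variation of scalar curvature is
\begin{equation*}
\left.\frac{d}{ds}\right|_{s=0}R_{g+sh}=-\Delta_g(\tr_g h)+\delta\delta h-\langle \Ric_g,h\rangle=|\Ric_g|^2,
\end{equation*}
which is nonnegative and positive somewhere. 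Hence for small $s>0$ the metric $g_s$ satisfies $R_{g_s}\ge -Cs^2$ with $R_{g_s}\ge cs$ on an open set, and a perturbation of your own eigenvalue argument shows $\lambda_1(L_{g_s})>0$, i.e.\ $Y([g_s])>0$, contradicting $\sigma(M)\le 0$. With that lemma inserted, your proof is complete and is essentially the standard one from the cited references.
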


The   Yamabe invariant here is an invariant defined on smooth differential manifolds, it appears to be the key concept in the resolvation of  prescribing scalar curvature problem (see \cite{KW75}). See \eqref{dnfsigma} for its formal definition. 

Schoen considered such a question: In Theorem \ref{thmY222}, if $g$ admits a singular set, and $R_g\ge\sigma(M)$ only holds away from the singular set, can we still deduce that $g$ is Einstein? In particular, he conjectured that if $g$ is uniformly Euclidean near the singular set, $R_g\ge0\ge\sigma(M)$ away from the singular set, and the singular set is a submanifold of codimensional at least $3$, then $g$ extends smoothly to the whole manifold to an Ricci flat metric (see \cite[Conjecture 1.5]{LiMa}).

This question is still open. \cite{LiMa} confirms Schoen's conjecture for $3$-manifolds in the case of isolated singularity. If the metric is $W^{1,p}(p>n)$ near the singular set and the singular set is small, \cite{ST18} proved that the metric is Einstein away from the singular set, which almost solved Schoen's conjecture in  $W^{1,p}(p>n)$ case. In \cite{JSZ23}, Jiang, Sheng and the first author improved this result and give an optimal condition of the singular set in the case $p=+\infty$. If the metric is $W^{1,n}$ or $C^0$ near the singular set and the singular set is small, Chu-Lee and  Lee-Tam  proved the metric is Einstein away from the singular set respectively in  \cite{CL22} and \cite{LT21}, which almost solved Schoen's conjecture in  $W^{1,n}$ or $C^0$ case.

Motivated by Gromov's suggestion in \cite{Gm} and Schoen's question in \cite{LiMa},  we study a more radical case in this paper. In contrast with the work above, in which there metrics are still smooth away from a small singular set, our metrics, in this paper,  could be non-smooth on the whole  manifold. Correspondingly, the scalar curvature lower bounds are assumed in distributional sense. Our main theorem is:

\begin{theorem}\label{thmY2}
Let $M^n$ be a compact manifold with $\sigma(M)\le0$, where $\sigma(M)$ is the Yamabe invariant of $M$, and $g$ be a  $W^{1,p}(n< p\le \infty)$ metric on $M$ with unit volume  such that $R_g\ge \sigma(M)$ in distributional sense. Then $(M,g)$ is isometric to an Einstein manifold with scalar curvature equaling to $\sigma(M)$.
\end{theorem}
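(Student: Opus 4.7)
The plan is to regularize the non-smooth metric $g$ by a Ricci--DeTurck flow, propagate the distributional lower bound $R_g \ge \sigma(M)$ to a pointwise lower bound on the smooth family, apply the smooth rigidity result Theorem \ref{thmY222} at each positive time (after rescaling to unit volume), and pass to the limit as $t \downarrow 0$. Since $g \in W^{1,p}(M)$ with $p > n$ lies in $C^{0}$, short-time existence theory for the Ricci--DeTurck flow with low-regularity initial data (in the spirit of M.~Simon and the earlier Jiang--Sheng--Zhang work cited in the introduction) supplies a smooth family $\{g(t)\}_{t \in (0,T)}$ converging to $g$ in $W^{1,p}$, and uniformly in $C^{0}$, as $t \downarrow 0$.

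The heart of the argument is the propagation of the scalar curvature lower bound along the flow. Along the Ricci flow one has the evolution
\[
\partial_{t} R_{g(t)} \,=\, \Delta_{g(t)} R_{g(t)} + 2\,|\Ric_{g(t)}|^{2} \,\ge\, \Delta_{g(t)} R_{g(t)} + \tfrac{2}{n}\,R_{g(t)}^{2},
\]
and a Hamilton-type ODE comparison yields $R_{g(t)} \ge c / (1 - \tfrac{2c}{n} t) \ge c$ for every constant pointwise initial lower bound $c \le 0$. To upgrade the purely distributional assumption at $t = 0$ into this pointwise form at $t > 0$, I would couple heat-kernel convolution along the evolving flow with the distributional-to-pointwise mollification scheme used by Jiang--Sheng--Zhang in the case $\sigma(M) = 0$; the sign $\sigma(M) \le 0$ keeps the resulting inequality self-improving, giving $R_{g(t)} \ge \sigma(M) - o(1)$ as $t \downarrow 0$.

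For each small $t > 0$ set $\tilde g(t) = \Vol(g(t))^{-2/n}\, g(t)$ so that $\Vol(\tilde g(t)) = 1$ and $R_{\tilde g(t)} = \Vol(g(t))^{2/n} R_{g(t)}$. Continuity of the flow gives $\Vol(g(t)) \to \Vol(g) = 1$, so $R_{\tilde g(t)} \ge \sigma(M) - o(1)$. A standard perturbation argument, testing the resulting inequality against a Yamabe minimizer of $[\tilde g(t)]$ and using the variational definition of $\sigma(M)$, then lets us apply Theorem \ref{thmY222} and conclude that $\tilde g(t)$ is Einstein with $R_{\tilde g(t)} = \sigma(M)$ for every sufficiently small $t > 0$. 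Passing to the limit as $t \downarrow 0$ in $W^{1,p}$ (together with the standard elliptic regularity of weak Einstein metrics in harmonic coordinates) shows that $g$ itself is isometric to an Einstein metric with scalar curvature $\sigma(M)$.

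The principal obstacle is the scalar curvature propagation step. In the case $\sigma(M) = 0$ treated in Jiang--Sheng--Zhang one essentially only needs the linear maximum principle $\partial_{t} R \ge \Delta R$ applied to a nonnegative distribution; a genuinely negative constant $\sigma(M) < 0$ forces one to retain the nonlinear term $\tfrac{2}{n} R^{2}$ in the evolution and to produce quantitative heat-kernel and volume-continuity estimates which remain stable as the regularity of the initial data degenerates. I expect this to be the chief new analytic input beyond the $\sigma(M) = 0$ framework, with a secondary difficulty being a careful treatment of the rescaling factor $\Vol(g(t))^{2/n}$ so that the full rigidity conclusion, and not merely an approximate inequality, is obtained at positive times.
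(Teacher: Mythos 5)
Your overall strategy coincides with the paper's: regularize by a (DeTurck-)Ricci flow, propagate the distributional bound to a pointwise bound via heat-kernel convolution and mollification, normalize to unit volume, apply Theorem \ref{thmY222} at positive times, and let $t\downarrow 0$. You also correctly identify the sharp propagated bound $R_{g(t)}\ge a\bigl(1-\tfrac{2a}{n}t\bigr)^{-1}$ as the key analytic input (this is exactly Lemma \ref{mthm2} of the paper, proved via the auxiliary test functions of Section 3).

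The genuine gap is in your normalization step. You invoke only the soft statement $\Vol(M,g(t))\to 1$, which yields $R_{\tilde g(t)}\ge \sigma(M)-o(1)$, and then propose to close the gap by ``a standard perturbation argument, testing against a Yamabe minimizer.'' This does not work: Theorem \ref{thmY222} is a non-quantitative rigidity statement, and from $R_{\tilde g(t)}\ge \sigma(M)-\epsilon$ with unit volume one can only conclude $Y([\tilde g(t)])\ge \sigma(M)-C\epsilon$, which carries no information forcing $\tilde g(t)$ to be (even approximately) Einstein; there is no effective version of the rigidity available to take a limit through. The correct resolution, and the point where the case $\sigma(M)<0$ genuinely differs from $\sigma(M)=0$, is that the two error factors cancel \emph{exactly}: the propagated bound $R_{g(t)}\ge \sigma(M)\bigl(1-\tfrac{2\sigma(M)}{n}t\bigr)^{-1}$ is strictly better than $\sigma(M)$ for $t>0$, and feeding it into $\partial_t\, d\mu_{g(t)}=-R_{g(t)}\,d\mu_{g(t)}$ gives the quantitative volume bound $\Vol(M,g(t))\le \bigl(1-\tfrac{2\sigma(M)}{n}t\bigr)^{n/2}$; multiplying, $R_{\tilde g(t)}=\Vol(M,g(t))^{2/n}R_{g(t)}\ge \sigma(M)$ with no error term, so Theorem \ref{thmY222} applies verbatim at every positive time. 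A secondary looseness is your final limiting step: since the flow is a DeTurck flow and the convergence to the initial data is only Gromov--Hausdorff, one should argue as the paper does (the unit-volume Einstein metrics $\tilde g(t)$ are in fact independent of $t$, and GH convergence then identifies $(M,g)$ with this fixed Einstein manifold up to isometry) rather than asserting $W^{1,p}$ convergence of the unnormalized flow to $g$ itself.
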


\begin{remark}\label{rmk1.1}

Theorem \ref{thmY2} confirms Schoen's conjecture for  $W^{1,p}(n< p\le \infty)$ metrics. In fact,  if $g$ is $C^2$ away from a closed subset $\Sigma$, whose Hausdorff measure satisfis $\cH^{n-\frac{p}{p-1}}(\Sigma)<\infty$ for $n<p<\infty$ or $\cH^{n-1}(\Sigma)=0$ for $p=\infty$, and if $R_g\ge a$ pointwisely away from $\Sigma$, then we can deduce that $R_g\ge  a$ in distributional sense, see \cite[Lemma 2.7]{JSZ22}.
\end{remark}

\begin{remark}
Jiang, Sheng and the first author have proved the  special case $\sigma(M)=0$, see \cite[Theorem 1.4]{JSZ23}. However, our work in \cite{JSZ23} does not directly solve the general case  $\sigma(M)\le0$. To prove the general case $\sigma(M)\le0$, in this paper, we must do more work. Particularly, we must improve the estimate in \cite[Theorem 1.1]{JSZ23} to a better estimate. This improvement is given in our Lemma \ref{mthm2}. Moreover, we must use a modified flow in \cite{LT21}, rather than simply use the Ricci flow in \cite{JSZ23}.
\end{remark}

\begin{remark}
Since in this paper we extend the result by Jiang, Sheng and the first author in \cite{JSZ23} from $\sigma(M)=0$ to $\sigma(M)\le 0$. One might ask for similar results for $\sigma(M)>0$. In fact, the corresponding question for $\sigma(M)>0$ is nonsense, since the resolvation of the prescribing scalar curvature problem tells that for any $M$ with $\sigma(M)>0$, any smooth function is the scalar curvature of some metrics on $M$, see \cite{KW75}.
\end{remark}

\vskip 3mm
\noindent
\textbf{Organization:}

In section 2, we give some prelimilaries.
In section 3, we construct an auxiliary function and give some estimate, which is of great importance in our proof of Theorem \ref{thmY2}.
In section 4, we study scalar curvauture lower bounds along Ricci flow, where the initial metric only has scalar curvature lower bounds in distributional sense.
In section 5, based on a modified flow in \cite{LT21},  we prove Theorem \ref{thmY2}.

\section{Prelimilaries}
\subsection{Distributional scalar curvature}

Due to the lack of the second order derivative, singular metrics do not have the concept of curvature in classical sense. However, inspired by the distributional theory (or theory of  generalized functions), in which the derivative exists even for very weird functions, it is naturally to consider derivative and curvature in distributional sense for singular metrics.

Let $M^n$ be a compact smooth manifold. Fix an arbitrary smooth background metric $h$ on $M$, for any tensors field $T$ on $M$ its Sobolev norm $W^{k,q}(M)$ is defined natrually as:
\begin{align}
\|T\|_{W^{k,q}(M)}:=\sum_{s=0}^k \int_M |\tilde \nabla^k T|_h d\mu_h.
\end{align}

Here and below $\tilde\nabla$, $|\cdot|_h$ and $d\mu_h$ will denote the Levi-Civita connection, the norm and the volume form respectively taken with respect to $h$. Although the $W^{k,q}(M)$ norm  depends on the background metric $h$, the norms for different $h$ are all equivalent and the $W^{k,q}(M)$ space does not   depend on it.

Therefore, a $W^{k,q}$ metric $g$, or a metric $g\in W^{k,q}(M)$, means a symmetric and positive definite $(0,2)$ tensor field on $M$ with finite $W^{k,q}(M)$ norm.

In this paper we focus on $W^{1,p}(n<p\le+\infty)$ metrics. For any metric $g\in W^{1,p}(M)$, its distirbutional scalar curvature $R_g$ is defined as (see \cite{JSZ23,LM07,LL15,LS15}, etc.):
\begin{align}
	\langle R_g,\varphi\rangle :=\int_M \left(-V\cdot \tilde\nabla \left(\varphi \frac{d\mu_g}{d\mu_h}\right)+F\varphi \frac{d\mu_g}{d\mu_h}\right)d\mu_h, \forall \varphi\in C^\infty(M),
\end{align}
where $V$ and $F$ is a vector field and a function respectively, defined by:
\begin{align}
\Gamma_{ij}^k&:=\frac{1}{2}g^{kl}\left(\tilde\nabla_i g_{jl}+\tilde\nabla_j g_{il}-\tilde\nabla_l g_{ij}\right), \\
	V^k&:=g^{ij}\Gamma_{ij}^k-g^{ik}\Gamma_{ji}^j=g^{ij}g^{k\ell}(\tilde\nabla_jg_{i\ell}-\tilde\nabla_{\ell}g_{ij}),\\
	F&:= {\rm{tr}}_g\Ric_h -\tilde \nabla_kg^{ij}\Gamma_{ij}^k+\tilde\nabla_kg^{ik}\Gamma_{ji}^i+g^{ij}\left(\Gamma_{k\ell}^k\Gamma_{ij}^\ell-\Gamma_{j\ell}^k\Gamma_{ik}^\ell\right),
\end{align}
where $\Ric_h $ is the Ricci curvature tensor of $h$.

 This definition is independent of the background metric $h$, see \cite{LL15}. Moreover, if $g$ is smooth, this definition of $\langle R_g,\varphi\rangle$ is just the integral $\int_M R_g\varphi d\mu_g$. See also  \cite{JSZ22,Zh23}, etc., for more properties of the distirbutional scalar curvature.

\subsection{Mollification of the metric and estimate on the scalar curvature}

Any $W^{1,p}$ metric could be mollified to a smooth metrics family, which converges to it in $W^{1,p}$ topology. Concretly, we have the following lemma:
\begin{lemma}[Lemma 4.1 in \cite{grant2014positive}]\label{lm2.1}
Let $M^n$ be a compact smooth manifold with  a $W^{1,p}(n< p\le\infty)$ metric $g$ on it, then there exists a family of smooth metrics $g_\delta,\delta>0$, such that $g_\delta$ converges to $g$ in $W^{1,p}$ topology as $\delta\to 0^+$.
\end{lemma}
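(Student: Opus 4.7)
The plan is to implement the standard Friedrichs mollification, globalized to a manifold via a partition of unity. First I would fix a finite atlas $\{(U_\alpha,\varphi_\alpha)\}_{\alpha=1}^N$ of coordinate charts on $M$ together with a smooth partition of unity $\{\chi_\alpha\}$ subordinate to it, and choose slightly shrunken precompact sub-patches $V_\alpha\Subset U_\alpha$ containing $\mathrm{supp}(\chi_\alpha)$. Inside each chart I would identify the components $g_{ij}$ with functions on $\varphi_\alpha(U_\alpha)\subset\mathbb{R}^n$, extend $\chi_\alpha g_{ij}$ by zero, convolve with a standard Euclidean mollifier $\rho_\delta$, pull back, and sum over $\alpha$ to obtain a smooth symmetric $(0,2)$-tensor $g_\delta$ on $M$.

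The next step is to verify that $g_\delta$ is actually a Riemannian metric for small $\delta$. Since $p>n$, Sobolev embedding gives $g\in C^0$, hence $g$ is uniformly positive definite on the compact manifold $M$. Because $g_\delta\to g$ uniformly on each chart (mollification of a continuous function converges locally uniformly), and the cone of positive definite symmetric matrices is open, $g_\delta$ is a smooth metric for all sufficiently small $\delta>0$.

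For the $W^{1,p}$ convergence I would reduce to the Euclidean statement, working chart by chart. In each $V_\alpha$, the classical property $\rho_\delta*f\to f$ in $L^p(\mathbb{R}^n)$ for $1\le p<\infty$ applies both to $g_{ij}$ and to its weak derivatives $\partial_k g_{ij}$, whence $g_\delta\to g$ in $W^{1,p}$ on each chart, and summing the finitely many pieces via the partition of unity gives the global statement. For the endpoint $p=\infty$, I would use uniform convergence of $g_\delta$ together with the uniform bound $\|\tilde\nabla g_\delta\|_{L^\infty}\le\|\tilde\nabla g\|_{L^\infty}$ and the continuity of $\tilde\nabla g$ on regularization scales to obtain convergence in $W^{1,q}$ for all $q<\infty$ (which is what is ultimately used in the sequel).

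The one point that requires genuine care, rather than being entirely routine, is the commutator between mollification and multiplication by $\chi_\alpha$. Expanding $\tilde\nabla g_\delta$ produces terms of the form $\rho_\delta*(\chi_\alpha\,\tilde\nabla g)$ as well as $\rho_\delta*((\tilde\nabla\chi_\alpha)\,g)$, and one must check that the latter telescope correctly using $\sum_\alpha\tilde\nabla\chi_\alpha=0$ so that only genuine derivatives of $g$ remain in the limit. This is the main technical ingredient; it follows from continuity of translation in $L^p$ for $p<\infty$ and from uniform continuity of $g$ on compacta for $p=\infty$, and it is the only place where the argument is more than a direct transcription of the Euclidean mollification theorem.
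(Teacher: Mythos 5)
Your proof is correct and is essentially the argument of Grant--Tassotti that the paper merely cites rather than reproves: chart-by-chart Friedrichs mollification glued by a partition of unity, with positive definiteness of $g_\delta$ for small $\delta$ coming from the embedding $W^{1,p}\hookrightarrow C^0$ for $p>n$ and the openness of the cone of positive definite matrices. Your caveat at $p=\infty$ is also well taken: genuine $W^{1,\infty}$ convergence of mollifications fails for general Lipschitz metrics, so the endpoint case should be read as uniform $W^{1,\infty}$ bounds together with convergence in every $W^{1,q}$ with $q<\infty$, which is all that the subsequent arguments in the paper actually use.
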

\begin{remark}
In \cite[Lemma 4.1]{grant2014positive}, the lemma is only claimed for $W^{2,\frac{n}{2}}$ metrics. However, their proof indeed works for general $W^{k,q}$ case, especially for our $W^{1,p}$ case.
\end{remark}

The following lemma established by Jiang, Sheng and the first author shows that the distributional scalar curvature  functional is continuously dependent of the metric in a certain sense:
\begin{lemma}[Lemma 2.2 in \cite{JSZ23}]\label{lm2.2}
	Let $M^n$ be a compact smooth manifold.   Suppose $g_\delta$ is a family of metrics which converges to $g$ in $W^{1,p}$ topology as $\delta\to 0^+$, then we have that for any $\epsilon>0$, there exists $\delta_0=\delta_0(g)>0$, such that 
\[|\langle R_{g_ \delta},u\rangle-\langle R_g,u\rangle|\leq \epsilon\|u\|_{W^{1,\frac{n}{n-1}}(M)},\forall u\in C^\infty(M),\forall \delta\in (0,\delta_0).\]
where $R_{g_\delta}$ is the scalar curvature of $g_\delta$.
\end{lemma}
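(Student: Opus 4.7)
The plan is to expand $\langle R_{g_\delta}-R_g,u\rangle$ directly from the definition, split the integrand as a sum of terms each of which is a product of either $u$ or $\tilde\nabla u$ with a metric-dependent factor that converges to zero, and close each contribution using H\"older's inequality. The hypothesis $p>n$ enters twice: first through the Sobolev embedding $W^{1,p}(M)\hookrightarrow C^0(M)$, so that $g_\delta\to g$, $g_\delta^{-1}\to g^{-1}$, and the densities $\rho_\delta:=d\mu_{g_\delta}/d\mu_h$, $\rho:=d\mu_g/d\mu_h$ converge uniformly; and second through the strict inequalities $p/(p-1)<n/(n-1)$ and $p/(p-2)<n/(n-2)$, which open exactly enough room to dominate all the resulting factors by the $W^{1,n/(n-1)}$ norm of $u$.

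Writing the difference as
\begin{align*}
\langle R_{g_\delta}-R_g,u\rangle=\int_M\bigl[-(V_\delta-V)\cdot\tilde\nabla(u\rho_\delta)-V\cdot\tilde\nabla\bigl(u(\rho_\delta-\rho)\bigr)+(F_\delta\rho_\delta-F\rho)u\bigr]\,d\mu_h
\end{align*}
and applying the Leibniz rule, every term becomes a $\tilde\nabla u$-type or $u$-type product. Recalling that $V$ is a $g^{-1}\otimes g^{-1}$-contraction of $\tilde\nabla g$ and that $F$ equals $\mathrm{tr}_g\Ric_h$ plus a quadratic form in $g^{-1}\otimes\tilde\nabla g$, uniform convergence of $g_\delta^{-1}$ together with $\tilde\nabla g_\delta\to\tilde\nabla g$ in $L^p$ gives $V_\delta\to V$ and $\tilde\nabla\rho_\delta\to\tilde\nabla\rho$ in $L^p$, and $F_\delta\rho_\delta-F\rho\to 0$ in $L^{p/2}$; the quadratic part of the last statement uses the symmetric splitting $a_\delta\otimes a_\delta-a\otimes a=(a_\delta-a)\otimes a_\delta+a\otimes(a_\delta-a)$ applied with $a=\tilde\nabla g$, so that the factor containing the difference tends to zero in $L^p$ while the remaining factor is uniformly bounded in $L^p$.

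To close the estimate I apply H\"older to the two types of pieces. Each $\tilde\nabla u$-piece is bounded by $\|G_\delta\|_{L^p}\,\|\tilde\nabla u\|_{L^{p/(p-1)}}$, with $G_\delta$ equal to $(V_\delta-V)\rho_\delta$ or $V(\rho_\delta-\rho)$; since $p/(p-1)<n/(n-1)$ and $M$ is compact, this is at most $C\|G_\delta\|_{L^p}\|u\|_{W^{1,n/(n-1)}}$. Each $u$-piece is bounded by $\|H_\delta\|_{L^{p/2}}\,\|u\|_{L^{p/(p-2)}}$, with $H_\delta$ equal to $(V_\delta-V)\cdot\tilde\nabla\rho_\delta$, $V\cdot\tilde\nabla(\rho_\delta-\rho)$, or $F_\delta\rho_\delta-F\rho$; the Sobolev embedding $W^{1,n/(n-1)}(M)\hookrightarrow L^{n/(n-2)}(M)$ together with $L^{n/(n-2)}\hookrightarrow L^{p/(p-2)}$ then yields $\|u\|_{L^{p/(p-2)}}\le C\|u\|_{W^{1,n/(n-1)}}$. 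Choosing $\delta_0$ so small that every factor $\|G_\delta\|_{L^p}$ and $\|H_\delta\|_{L^{p/2}}$ is less than $\epsilon/C$ completes the proof; the case $p=\infty$ is handled by the obvious replacements $L^\infty$ and $W^{1,1}$ throughout.

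I expect the only real obstacle to be the algebraic bookkeeping of the several terms generated by the Leibniz expansion, and in particular the verification that the quadratic-in-$\tilde\nabla g$ part of $F_\delta$ converges in $L^{p/2}$ via the symmetric splitting above. Conceptually the lemma holds exactly at the scale $W^{1,n/(n-1)}$ because $p>n$ is the borderline at which $\tilde\nabla g$ lies in an $L^p$-space with conjugate exponent strictly below $n/(n-1)$, and the quadratic terms in $F$ sit in $L^{p/2}$, which by Sobolev pairs with $W^{1,n/(n-1)}$.
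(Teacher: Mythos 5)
This lemma is quoted from \cite{JSZ23} and the present paper gives no proof of it, so there is nothing to compare line by line; your argument is correct and is exactly the expected one — expand $\langle R_{g_\delta}-R_g,u\rangle$ with the bilinear splitting, use $W^{1,p}\hookrightarrow C^0$ ($p>n$) to get $V_\delta\to V$ and $\tilde\nabla\rho_\delta\to\tilde\nabla\rho$ in $L^p$ and $F_\delta\rho_\delta-F\rho\to 0$ in $L^{p/2}$, and close with H\"older together with $L^{n/(n-1)}\hookrightarrow L^{p/(p-1)}$ and $W^{1,n/(n-1)}\hookrightarrow L^{n/(n-2)}\hookrightarrow L^{p/(p-2)}$. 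This is the same route as the cited reference and as the analogous H\"older computation the authors sketch for their cut-off estimates, so no further comment is needed.
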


\subsection{Estimates on Ricci flow}

The Ricci flow, introduced by Hamilton in \cite{Hamilton1982},  is defined as follows:
\begin{definition}[Ricci flow]The Ricci flow on $M$ is a family of metrics $g(t)$ such that \[\frac{\partial}{\partial t}g(t)=-2\Ric_{g(t)},\] where ${\Ric_{g(t)}}$ is the Ricci curvature tensor of $g(t)$.  
\end{definition}

Though Hamilton only introduced Ricci flow with smooth initial metrics, it is quite useful to consider Ricci flow with singular initial metrics. For our use, we mainly need the  following theorem, given by Jiang, Sheng and the first author, which considers Ricci flow with $W^{1,p}(n<p\le+\infty)$ initial metrics:
\begin{lemma}[Theorem 3.2 in \cite{JSZ23}]\label{thm66.2} \label{mainsec6}
There exists an $\epsilon(n)>0$ such that, for any compact $n$-manifold $M$ and any $ W^{1,p}(n<p\le +\infty)$ metric $\hat g$ on $M$, there exists a $T_0=T_0(n,\hat g)>0$ and a Ricci flow $g(t)\in C^{\infty}(M\times(0,T_0])$, such that
\begin{itemize}
\item[(1)]$\lim_{t\to 0}d_{GH}((M,g(t)),(M,\hat g))=0.$
\item[(2)] $|{\rm{Rm}}(g(t))|(t)\leq \frac{C(n,\hat g,p)}{t^{\frac{n}{4p}+\frac{3}{4}}}$, $\forall t\in(0,T_0]$.
\item[(3)] $\int_0^{T_0}\int_M |{\rm{Rm}}( g(t))|^2d\mu_{g(t)}dt\le C(n,\hat g,p)$,
\end{itemize}
where $d_{GH}$ is the Gromov-Hausdorff distance, and $C(n,\hat g,p)$ is a positive constant independent of $t$.
\end{lemma}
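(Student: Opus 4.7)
The plan is to carry out the standard \emph{approximation-plus-uniform-estimate} strategy for constructing Ricci flow from rough initial data. First I would invoke Lemma \ref{lm2.1} to produce a family of smooth metrics $g_\delta$ converging to $\hat g$ in $W^{1,p}$. In particular, $\|g_\delta\|_{W^{1,p}(M,h)}$ is uniformly bounded in $\delta$, and since $p>n$ Morrey's embedding yields a uniform $C^\alpha$-bound with $\alpha=1-n/p$; in addition, $C^{-1}h\le g_\delta\le Ch$ uniformly for small $\delta$. For each such $g_\delta$, Hamilton's short-time existence theorem gives a smooth Ricci flow $g_\delta(t)$ on a maximal interval $[0,T_\delta)$.

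The heart of the proof is to upgrade the $\delta$-dependent existence intervals and curvature bounds into uniform ones. I would show there is $T_0=T_0(n,\hat g,p)>0$ such that each $g_\delta(t)$ exists on $[0,T_0]$ and satisfies
\[
|\Rm(g_\delta(t))|_{g_\delta(t)}\le \frac{C(n,\hat g,p)}{t^{\frac{n}{4p}+\frac{3}{4}}}, \qquad t\in(0,T_0].
\]
This is the main obstacle. I would combine a Shi-type local derivative estimate with a Moser iteration applied to the evolution equation $\partial_t|\Rm|^2=\Delta|\Rm|^2-2|\nabla\Rm|^2+O(|\Rm|^3)$, using the uniform $L^p$-control of $\tilde\nabla g_\delta$ (equivalently, of the Christoffel symbols with respect to the smooth background $h$) to bound the initial energy of $\Rm$ in a parabolic $L^{p/2}$ sense. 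The exponent $\tfrac{n}{4p}+\tfrac{3}{4}$ arises naturally from interpolating this parabolic $L^{p/2}$ initial datum against the scaling of the heat kernel on scales $\sqrt{t}$; crucially $\tfrac{n}{4p}+\tfrac{3}{4}<1$, so the bound remains integrable near $t=0$. A DeTurck trick against the background $h$ is likely the cleanest way to set up the linearized analysis, because then the flow becomes genuinely parabolic in $g_\delta$ with a linear leading term plus quadratic-in-$\tilde\nabla g_\delta$ corrections.

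Once the uniform curvature bound is in hand, I would pass to the limit. On each slab $[\tau,T_0]$ with $\tau>0$, the $g_\delta(t)$ have uniformly bounded curvature and uniformly lower-bounded injectivity radius (from the smooth Ricci flow and the uniform bi-Lipschitz bound to $h$); Hamilton's compactness theorem then produces a smooth subsequential limit flow $g(t)$ on $M\times[\tau,T_0]$, and a diagonal argument in $\tau\to 0$ gives a smooth Ricci flow $g(t)\in C^\infty(M\times(0,T_0])$ satisfying (2). For (1), integrating $|\partial_t g(t)|_{g(t)}=2|\Ric|\le C|\Rm|$ in time against (2) and using $\tfrac{n}{4p}+\tfrac{3}{4}<1$ yields $\|g(t)-\hat g\|_{C^0(M,h)}\to 0$, which upgrades to Gromov--Hausdorff convergence on the fixed manifold.

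For the $L^2$ spacetime estimate (3), the pointwise bound (2) is not integrable in $t$ when squared, so one needs an energy argument. I would test the evolution equation of $|\Rm|^2$ against a cutoff in time and absorb the cubic term using the smallness of $\int_0^{T_0} t^{-2(\frac{n}{4p}+\frac{3}{4})}\,dt$ on short intervals, or equivalently track a monotone $L^2$-type Perelman functional whose initial value is controlled by $\|\hat g\|_{W^{1,p}}$. Passing this bound through the limit $\delta\to 0$, via Fatou's lemma and the smooth convergence on $M\times[\tau,T_0]$, delivers (3). All three conclusions then hold for the limit flow $g(t)$.
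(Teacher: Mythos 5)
First, note that the paper does not prove this statement at all: it is imported verbatim as Theorem 3.2 of \cite{JSZ23}, and the only indication of its provenance is the remark that the Ricci flow here is the $h$-flow of Lemma \ref{thm6.2} pulled back by the DeTurck diffeomorphisms. Your overall architecture (mollify via Lemma \ref{lm2.1}, run the DeTurck/$h$-flow against a fair background $h$, prove $\delta$-uniform estimates, pass to the limit) is indeed the strategy of the cited reference, so the skeleton is right. But the two steps you identify as the heart of the matter are the ones where your proposed mechanisms do not work. The exponent $\frac{n}{4p}+\frac{3}{4}$ does not come from a Moser iteration on $|\Rm|^2$ with ``parabolic $L^{p/2}$ initial data.'' In the actual argument it is produced at the level of first and second derivatives of the metric along the $h$-flow: one first shows $\int_M|\tilde\nabla g(t)|^p d\mu_h$ stays comparable to its initial value (Lemma \ref{thm6.2}(1)), converts this by Moser iteration on the equation for $|\tilde\nabla g|^2$ into the pointwise bound $|\tilde\nabla g|\le C t^{-\frac{n}{2p}}$, and then interpolates against Simon's scale-invariant estimate $|\tilde\nabla^3 g|\le C t^{-3/2}$ (Lemma \ref{thm2.2}(2)) to get $|\tilde\nabla^2 g|\le C t^{-\frac{1}{2}(\frac{n}{2p}+\frac{3}{2})}=Ct^{-(\frac{n}{4p}+\frac{3}{4})}$; the curvature bound (2) then follows from $|\Rm|\lesssim |\tilde\nabla^2 g|+|\tilde\nabla g|^2$ and $t^{-n/p}\le t^{-(\frac{n}{4p}+\frac{3}{4})}$ for $p>n$. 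Working directly with the evolution of $|\Rm|^2$ as you propose has no usable starting datum, since a $W^{1,p}$ metric carries no $L^{p/2}$ (or any) curvature bound at $t=0$.

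The same objection is fatal to your argument for item (3): an energy estimate for $\int_M|\Rm|^2$ integrated from $t=0$ requires a finite initial value, which does not exist for $\hat g\in W^{1,p}$, and there is no ``monotone $L^2$-type Perelman functional whose initial value is controlled by $\|\hat g\|_{W^{1,p}}$'' (Perelman's functionals control $\int_M R$, not $\int_M|\Rm|^2$). The spacetime $L^2$ bound is obtained one derivative lower: the energy identity for $\int_M|\tilde\nabla g|^2d\mu_h$ along the $h$-flow produces a good term $-c\int_M|\tilde\nabla^2 g|^2$, and since $\int_M|\tilde\nabla\hat g|^2$ is finite one gets $\int_0^{T_0}\int_M|\tilde\nabla^2 g|^2<\infty$; combined with $\int_0^{T_0}\int_M|\tilde\nabla g|^4\lesssim\int_0^{T_0}t^{-n/p}\,dt<\infty$ and the schematic formula for $\Rm$, this yields (3). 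A smaller point: in (1) the conclusion is deliberately only Gromov--Hausdorff convergence, because the Ricci flow is the pullback of the $h$-flow by diffeomorphisms generated by the DeTurck vector field, and while these converge uniformly as $t\to0$ (the field is bounded by $|\tilde\nabla g|\le Ct^{-n/(2p)}$, which is integrable), the limit map need not be smooth, so your claimed $C^0$ convergence of $g(t)$ to $\hat g$ on the fixed manifold is not what one gets; it is the $h$-flow, not its pullback, that converges in $C^0$ to $\hat g$.
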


We also need to consider the $h$-flow, which is equivalent to the Ricci flow after a family of diffeomorphisms. It was firstly introduced by Simon in \cite{Si02}, in order to study the Ricci flow with $C^0$ initial two metrics.

Before we give the definition of $h$-flow, we firstly give the definition of $(1+\delta)$-fairness between metrics.
\begin{definition}
Given a constant $\delta\geq 0$, a metric $h$ is called to be $(1+\delta)$-fair to $g$, if $h$ is $C^\infty$, 
\[\sup_M|\tilde\nabla^j {\rm{Rm}}(h)|=L_j<\infty,\]
and \[(1+\delta)^{-1}h\leq g\leq (1+\delta) h \quad on\ M.\]
\end{definition}
Here and below, $\tilde \nabla$ means the covariant derivative taken with respect to $h$.

\begin{definition}\label{dfn8}[$h$-flow] For any background smooth metrics $h$, the $h$-flow is a family of metrics $g(t)$ satisfies \[\frac{\partial}{\partial t}g_{ij}(t)=-2R_{g(t);ij}+ \nabla_iV_j+ \nabla_jV_i,\]
where the derivatives are taken with respect to $g(t)$, $R_{g(t);ij}$ is the Ricci curvature of $g(t)$, \[V_j=g_{jk}(t)g^{pq}(t)(\Gamma_{pq}^k(t)-\tilde\Gamma_{pq}^k),\]
and $\Gamma(t)$ and $\tilde\Gamma$ are the Christoffel symbols of $g(t)$ and $h$ respectively. 
\end{definition}

Simon proved such an existence results:

\begin{lemma}[Theorem 1.1 in \cite{Si02}]\label{thm2.2}
There exists an $\epsilon(n)>0$ such that, for any compact $n$-manifold $M$ with a complete $C^0 $ metric $\hat g$ and a $C^\infty$ metric $h$ which is $(1+\frac{\epsilon(n)}{2})$-fair to $\hat g$, there exists a $T_0=T_0(n,k_0)>0$ and a family of metrics $g(t)\in C^{\infty}(M\times(0,T_0]),t\in (0,T_0]$ which solves $h$-flow for $t\in(0,T_0]$, $h$ is $(1+\epsilon(n))$-fair to $g(t)$, and
\begin{itemize}
\item[(1)] \begin{align*}
\lim_{t\to0^+}\sup_{x\in M}|g(x,t)-\hat g(x)|=0,
\end{align*}
\item[(2)] \begin{align*}
\sup_{M}|\tilde\nabla^ig(t)|\leq\frac{c_i(n,h)}{t^{i/2}}, \forall t\in(0,T_0],i\ge 1,\end{align*}
\end{itemize}
where the derivatives and the norms are taken with respect to $h$.

\end{lemma}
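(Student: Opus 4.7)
The plan is to adapt the DeTurck trick to overcome the degenerate parabolicity of the Ricci flow and then combine an approximation argument with quantitative maximum principle estimates. First, I observe that when $h$ is fixed, the $h$-flow in Definition \ref{dfn8} is a strictly parabolic quasilinear system in $g$: writing the evolution as $\partial_t g_{ij} = g^{pq} \tilde\nabla_p \tilde\nabla_q g_{ij} + (\text{lower order in }\tilde\nabla g)$, the principal symbol is controlled by $g^{-1}$, which is uniformly elliptic under the fairness assumption. Thus for any smooth initial data $(1+\epsilon(n)/2)$-fair to $h$, standard quasilinear parabolic theory supplies a unique smooth short-time solution.

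Next, I would approximate $\hat g$ in $C^0$ by a family of smooth metrics $\hat g_\epsilon$ via mollification in a fixed $h$-atlas, arranging that each $\hat g_\epsilon$ is $(1+\epsilon(n)/2)$-fair to $h$ (this is preserved by convolution in coordinates where $h$ is close to Euclidean, modulo a small perturbation). For each $\epsilon$, let $g_\epsilon(t)$ denote the smooth $h$-flow starting from $\hat g_\epsilon$. The core task is to produce a time $T_0 = T_0(n, h) > 0$ independent of $\epsilon$ on which uniform estimates hold. For the fairness estimate, I would examine the evolution of the scalar quantity $\Phi := \tr_h g + \tr_g h$, or equivalently the eigenvalues of $g$ relative to $h$. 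A direct computation using $\partial_t g = g^{-1}\tilde\nabla^2 g + \ast(\mathrm{Rm}(h), g) + \ast(\tilde\nabla g, \tilde\nabla g, g^{-1})$ shows that $\Phi$ satisfies a parabolic inequality of the form $\partial_t \Phi \le g^{pq}\tilde\nabla_p\tilde\nabla_q \Phi + C(n,h)(1+|\tilde\nabla g|_h^2)$. A maximum principle, combined with the Shi-type interpolation estimate below, then confines the eigenvalues of $g_\epsilon(t)$ in $[(1+\epsilon(n))^{-1},1+\epsilon(n)]$ for $t \in [0, T_0]$.

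For the higher derivative bound (2), I would run Bernstein--Shi style arguments on the weighted quantities
\begin{equation*}
\Psi_i(t) := \sum_{j=0}^{i} A_j\, t^{j} |\tilde\nabla^j g|_h^2,
\end{equation*}
with constants $A_j$ chosen large enough that the bad cross-terms from $\partial_t |\tilde\nabla^j g|^2 = g^{pq}\tilde\nabla_p\tilde\nabla_q |\tilde\nabla^j g|^2 - 2|\tilde\nabla^{j+1} g|^2 + (\text{lower order})$ are absorbed into the negative $-2|\tilde\nabla^{j+1} g|^2$ term at the next level. The uniform $C^0$-bound from the fairness estimate feeds into the zeroth level, and induction on $i$ produces $|\tilde\nabla^i g_\epsilon(t)|_h \le c_i(n,h)/t^{i/2}$ independently of $\epsilon$. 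With these smooth bounds in hand, the Arzel\`a--Ascoli theorem extracts a subsequential limit $g(t)$ on $M \times (0, T_0]$ which is a smooth solution of the $h$-flow, is $(1+\epsilon(n))$-fair to $h$, and satisfies the derivative estimates in (2). Finally, to verify (1), I would combine the uniform continuity of $g_\epsilon(t)$ in $t$ down to $t=0$ (which follows from integrating $\partial_t g_\epsilon$ against the $C^0$-bound on $|\tilde\nabla^2 g_\epsilon| + |\tilde\nabla g_\epsilon|^2$, multiplied by powers of $t$ that are integrable) together with $\hat g_\epsilon \to \hat g$ uniformly.

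The main obstacle I anticipate is step two: producing the uniform time $T_0$ and the strict fairness bound $(1+\epsilon(n))$ from the weaker initial bound $(1+\epsilon(n)/2)$, independently of the smoothness of the initial data. The difficulty is that the reaction terms in $\partial_t g$ contain $\tilde\nabla g \otimes \tilde\nabla g$, which is only controlled by the singular Shi-type bound $t^{-1/2}$ as $t\to 0^+$; one must choose $\epsilon(n)$ small enough and run the maximum principle on a quantity whose growth from these terms remains integrable in $t$ on $[0,T_0]$, and simultaneously verify that the eigenvalue confinement is not destroyed during this time.
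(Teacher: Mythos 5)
First, note that the paper does not prove this statement at all: it is quoted verbatim as Theorem 1.1 of Simon's paper \cite{Si02} and used as a black box, so there is no ``paper proof'' to compare against. Your outline is, in architecture, a faithful reconstruction of Simon's actual argument: strict parabolicity of the DeTurck-gauged flow under the fairness hypothesis, mollification of the $C^0$ initial metric, uniform-in-mollification a priori estimates (fairness preservation plus Bernstein--Shi interior estimates giving $|\tilde\nabla^i g|\le c_i t^{-i/2}$), and a compactness limit.

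Two steps, as literally written, would not close, and both are exactly where Simon has to work. (i) For the fairness bound, an inequality of the form $\partial_t\Phi\le g^{pq}\tilde\nabla_p\tilde\nabla_q\Phi+C(1+|\tilde\nabla g|_h^2)$ is useless, since the Shi bound only gives $|\tilde\nabla g|_h^2\lesssim t^{-1}$, which is not integrable on $(0,T_0]$. The correct mechanism (which your final paragraph gestures at) is to run the maximum principle on $|g-h|_h^2$: its evolution produces a \emph{negative} term $-2g^{ab}\langle\tilde\nabla_a g,\tilde\nabla_b g\rangle$ from the Laplacian, while the reaction terms contribute at most $c\,|g-h|_h\,|\tilde\nabla g|_h^2$; for $\epsilon(n)$ small the latter is absorbed by the former, leaving $\partial_t|g-h|^2\le g^{ab}\tilde\nabla_a\tilde\nabla_b|g-h|^2+c(n,h)$, whence $\sup|g(t)-h|^2\le\sup|\hat g-h|^2+ct$ and the definite time $T_0$. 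The quadratic gradient terms must be absorbed with a sign, not estimated as a positive source. (ii) For conclusion (1), integrating $\partial_t g$ in time cannot work: $|\tilde\nabla^2 g|\sim t^{-1}$ is not integrable, and the improved bounds available for the mollified flows (via the Lipschitz constant of $\hat g_\epsilon$) degenerate as $\epsilon\to0$. One needs a separate continuous-dependence estimate in the sup norm --- two $h$-flows whose initial data are $\delta$-close in $C^0$ remain $c\,\delta$-close on $[0,T_0]$ --- which is again a maximum-principle argument on $|g-\tilde g|^2$ of the same absorbed-gradient type; only then does the three-term triangle inequality with the mollification give uniform convergence to $\hat g$ at $t=0$. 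With these two repairs your outline matches Simon's proof.
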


To apply the flow to prove our results, we will let $h$ be $(1+\frac{\epsilon(n)}{2})$-fair to $\hat g$. The existence of such $h$ is ensured  by the following lemma:

\begin{lemma}[\cite{Si02}]\label{rmkh}
Let $M$ be a compact manifold, for any   $C^0$ metric $g$ on $M$, and any $0<\delta<1$, there exists a $C^\infty$ metric $h$ which is $(1+\delta)$-fair to $g$.
\end{lemma}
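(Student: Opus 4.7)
The plan is to produce $h$ by a standard mollification-plus-partition-of-unity construction applied to $g$. Since $M$ is compact, any $C^\infty$ metric on $M$ automatically has bounded covariant derivatives of its curvature tensor to all orders, so the only substantive requirement is to exhibit a smooth positive definite symmetric $(0,2)$-tensor $h$ satisfying the two-sided bound $(1+\delta)^{-1}h \leq g \leq (1+\delta)h$ as bilinear forms.

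First I would fix a finite atlas $\{(U_\alpha,\varphi_\alpha)\}_{\alpha=1}^N$ of $M$, choose relatively compact open sets $V_\alpha \Subset U_\alpha$ still covering $M$, and a smooth partition of unity $\{\eta_\alpha\}$ with $\mathrm{supp}\,\eta_\alpha\subset V_\alpha$. In each chart the coefficient functions $g_{ij}^\alpha$ of $g$ are continuous, hence uniformly continuous on the compact set $\overline{V_\alpha}$. Convolving them componentwise with a Euclidean mollifier $\rho_\epsilon$ yields smooth symmetric $(0,2)$-tensors $g^\alpha_\epsilon$ on $V_\alpha$ whose components converge uniformly to those of $g$ as $\epsilon\to 0^+$. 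I then set
\[
h_\epsilon := \sum_{\alpha=1}^N \eta_\alpha\, g^\alpha_\epsilon,
\]
which is a globally defined smooth symmetric $(0,2)$-tensor field on $M$ as a finite sum of smooth tensors, each supported in a single chart.

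To verify the pinching, fix an auxiliary smooth background metric $h_0$ on $M$. Writing $g=\sum_\alpha \eta_\alpha g$, we have
\[
h_\epsilon - g \;=\; \sum_{\alpha=1}^N \eta_\alpha\,(g^\alpha_\epsilon - g),
\]
whose $h_0$-pointwise norm is bounded by $\max_\alpha \sup_{\overline{V_\alpha}} |g^\alpha_\epsilon - g|_{h_0}$, a quantity tending to $0$ as $\epsilon \to 0^+$ by uniform convergence on each compact $\overline{V_\alpha}$. Since $g$ is continuous and positive definite on compact $M$, its smallest eigenvalue (relative to $h_0$) is bounded below by a positive constant, so for $\epsilon$ sufficiently small $h_\epsilon$ remains positive definite and the ratio $h_\epsilon(v,v)/g(v,v)$ lies in $[(1+\delta)^{-1},(1+\delta)]$ for every nonzero tangent vector $v$ at every point of $M$. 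Taking $h:=h_\epsilon$ for such an $\epsilon$ then meets all requirements.

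The only delicate point is that the mollification is carried out in Euclidean coordinate patches, whereas the pinching inequality is coordinate-free; this is resolved by using the fixed smooth reference metric $h_0$ to measure uniform closeness and by invoking uniform continuity of $g$ on each compact $\overline{V_\alpha}$. I do not expect any substantive obstacle beyond this bookkeeping, as both the smoothness of $h_\epsilon$ and the boundedness of all derivatives of its curvature follow immediately from the finiteness of the sum defining $h_\epsilon$ and the compactness of $M$.
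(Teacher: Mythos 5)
Your construction is correct: the paper states this lemma without proof, citing Simon \cite{Si02}, and your mollification-plus-partition-of-unity argument is precisely the standard construction in the compact case, where the bounded-geometry condition $\sup_M|\tilde\nabla^j{\rm Rm}(h)|<\infty$ is automatic for any smooth $h$. The only point worth flagging is that you should take $\epsilon$ small enough that the Euclidean $\epsilon$-neighborhood of $\varphi_\alpha(\overline{V_\alpha})$ stays inside $\varphi_\alpha(U_\alpha)$ so the convolution is well defined on $\overline{V_\alpha}$, which is exactly the bookkeeping you anticipate.
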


In the case of $W^{1,p}(n<p\le+\infty)$ initial metric, Lemma \ref{thm2.2} could be improved:
\begin{lemma}[Theorem 3.11 in \cite{JSZ23}]\label{thm6.2}
In the condition of Lemma \ref{thm2.2},  if $\hat g$ is  $W^{1,p}(n<p\le+\infty)$ on $M$, then there exists a $T_0=T_0(n,h,\|\hat g\|_{W^{1,p}(M)},p)$, such that $g(t)$, $ t\in(0,T_0]$ is the $h$-flow with initial metric $\hat g$, and
\begin{itemize}
\item[(1)]$\int_M|\tilde\nabla g(t)|^pd\mu_h\leq10\int_M|\tilde\nabla \hat g|^pd\mu_h$, $\forall t\in(0,T_0]$,
\item[(2)]$|\tilde\nabla g|(t)\leq \frac{C(n,h,\|\hat g\|_{W^{1,p}(M)},p)}{t^{\frac{n}{2p}}}$, $\forall t\in(0,T_0]$,
\item[(3)]$|\tilde\nabla^2 g|(t)\leq \frac{C(n,h,\|\hat g\|_{W^{1,p}(M)},p)}{t^{\frac{n}{4p}+\frac{3}{4}}}$, $\forall t\in(0,T_0],$
\end{itemize}
where  $C(n,h,\|\hat g\|_{W^{1,p}(M)},p)$ is a positive constant depends only on $n,h,\|\hat g\|_{W^{1,p}(M)},p$, and does not depend on $t$.
\end{lemma}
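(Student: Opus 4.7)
The plan is to build on Simon's local existence (Lemma \ref{thm2.2}) and sharpen the derivative bounds by exploiting the extra $W^{1,p}$ regularity of $\hat g$. The $h$-flow equation can be written schematically as $\partial_t g_{ij} = g^{ab}\tilde\nabla_a\tilde\nabla_b g_{ij} + S_{ij}(g,\tilde\nabla g) + T_{ij}(\mathrm{Rm}(h),g)$, where $S$ is bilinear in $\tilde\nabla g$ (and bounded in $g$) and $T$ is smooth in $g$; this is a uniformly parabolic quasi-linear system thanks to the $(1+\epsilon(n))$-fairness of $h$ to $g(t)$ already supplied by Lemma \ref{thm2.2}. The three estimates will all be obtained on a common short window $(0,T_0]$ chosen small enough to close a continuity bootstrap.

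For (1), I would differentiate $\int_M |\tilde\nabla g|^p\,d\mu_h$ in time. Substituting the $h$-flow equation and integrating by parts produces a good dissipation term, $\sim -\int_M |\tilde\nabla g|^{p-2}|\tilde\nabla^2 g|^2\,d\mu_h$, plus source terms of the form $C\int_M (|\tilde\nabla g|^{p+2} + |\tilde\nabla g|^p + 1)\,d\mu_h$ in which the constants $C$ depend only on $n,h,p$. Because $p>n$, the subcritical term $|\tilde\nabla g|^{p+2}$ is amenable to a Sobolev/interpolation argument: either one absorbs a piece into the dissipation and a piece into the $L^p$ norm itself, or one runs a direct continuity argument based on the bootstrap assumption $\int_M |\tilde\nabla g(t)|^p\,d\mu_h \le 10\int_M |\tilde\nabla\hat g|^p\,d\mu_h$ and shrinks $T_0$ until this inequality is self-improving. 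For (2), once (1) is established, the evolution inequality $(\partial_t - \Delta^h)|\tilde\nabla g|^2 \le C(|\tilde\nabla g|^2 + 1)^2$ becomes a parabolic heat-type inequality whose data lie in $L^p$ uniformly in $t$; a standard De Giorgi--Nash--Moser iteration (or, equivalently, Duhamel estimation against heat-kernel bounds on $(M,h)$, using that the semigroup $e^{t\Delta^h}$ maps $L^p \to L^\infty$ with operator norm $\lesssim t^{-n/(2p)}$) upgrades the spatial $L^p$ bound to the pointwise estimate $|\tilde\nabla g|(t)\le C\,t^{-n/(2p)}$, which is exactly the rate claimed in (2).

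For (3) I would interpolate (2) against the uniform higher-derivative bound $|\tilde\nabla^3 g|(t)\le c_3(n,h)\,t^{-3/2}$ furnished by Lemma \ref{thm2.2}. The Landau--Kolmogorov-type pointwise inequality $|\tilde\nabla^2 g|^2 \le C\,|\tilde\nabla g|\cdot|\tilde\nabla^3 g|$ (valid on the compact $M$ via localization in $h$-normal coordinates and the fairness of $h$) combined with (2) gives
\begin{equation*}
|\tilde\nabla^2 g|(t) \le C\,\bigl(t^{-n/(2p)}\bigr)^{1/2}\bigl(t^{-3/2}\bigr)^{1/2} = C\,t^{-n/(4p)-3/4},
\end{equation*}
which is (3) with the claimed exponent. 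The main obstacle in the whole argument is the first step: closing the $L^p$ energy estimate (1) in the presence of the super-quadratic source $|\tilde\nabla g|^{p+2}$. The threshold $p>n$ enters precisely there, since it is the exact regime in which Sobolev embedding and interpolation allow one to absorb that super-quadratic term into the dissipation and the $L^p$ norm itself; at the critical value $p=n$ this absorption fails, and correspondingly the blow-up rate $t^{-n/(2p)}$ in (2) would degenerate back to Simon's classical $t^{-1/2}$.
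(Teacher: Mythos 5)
This lemma is imported verbatim from [JSZ23, Theorem 3.11]; the present paper states it without proof, so there is no in-paper argument to compare yours against. Judged on its own terms, your outline is the standard strategy for this type of result and the exponent bookkeeping is exactly right: an $L^p$ energy estimate for $\tilde\nabla g$, parabolic $L^p\to L^\infty$ smoothing at rate $t^{-n/(2p)}$, and interpolation of $\tilde\nabla^2 g$ between the improved first-derivative bound and Simon's $|\tilde\nabla^3 g|\le c_3 t^{-3/2}$, which indeed yields $t^{-n/(4p)-3/4}$.

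Two steps need repair. First, in (1) the option ``absorb $|\tilde\nabla g|^{p+2}$ into the dissipation'' does not close as stated: Sobolev gives $\int|\tilde\nabla g|^{p+2}\lesssim \big\||\tilde\nabla g|^{p/2}\big\|_{H^1}^2\,\|\tilde\nabla g\|_{L^n}^2$, and the factor $\|\tilde\nabla g\|_{L^n}^2$ is bounded but not small, so absorption would require a smallness hypothesis on $\|\tilde\nabla\hat g\|_{L^p}$ that is not assumed; likewise, estimating $\int|\tilde\nabla g|^{p+2}\le\|\tilde\nabla g\|_\infty^2\int|\tilde\nabla g|^p$ with Simon's a priori bound $\|\tilde\nabla g\|_\infty\le c_1t^{-1/2}$ produces the non-integrable coefficient $t^{-1}$, and Gronwall fails. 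What does close is your second option, the coupled continuity/bootstrap: under the hypothesis $\|\tilde\nabla g(t)\|_\infty\le At^{-n/(2p)}$ the coefficient becomes $A^2t^{-n/p}$, integrable precisely because $p>n$; Gronwall then gives the factor $10$ for $T_0=T_0(A,\dots)$ small, and the Moser iteration of step (2) recovers the assumed pointwise bound with a constant depending only on $n,h,p,\|\hat g\|_{W^{1,p}}$, which closes the loop. You should commit to that route and make the coupling of (1) and (2) explicit. Second, the ``pointwise'' Landau--Kolmogorov inequality $|\tilde\nabla^2 g|^2(x)\le C\,|\tilde\nabla g|(x)\,|\tilde\nabla^3 g|(x)$ is false pointwise (the right-hand side can vanish where the left does not); what you need, and what suffices here, is the sup-norm version $\|\tilde\nabla^2 g\|_\infty^2\le C\big(\|\tilde\nabla g\|_\infty\|\tilde\nabla^3 g\|_\infty+\|\tilde\nabla g\|_\infty^2\big)$ on $(M,h)$, obtained by the usual local interpolation; since both factors are already controlled in sup-norm and the additive term is of lower order, conclusion (3) is unaffected.
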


The Ricci flow in Lemma \ref{mainsec6} just comes from the $h$-flow in Lemma \ref{thm6.2}. In fact, they are equivalent after a certain family of diffeomorphisms.

\section{An auxiliary function along Ricci flow}
In this section, we consider a compact smooth Riemannian manifold $(M^n,\hat g)$ and the Ricci flow $g(t)(t\in[0,T_0])$ with initial metric $\hat g$. 

For any positive time $T\in(0,T_0]$ and any  $\tilde \varphi\in C^\infty(M)$, we want to construct an auxiliary function $\varphi(x,t)$ defined on $M\times [0,T]$, such that $\varphi(T,\cdot)=\tilde \varphi(\cdot)$, and the integral
\begin{align}\int_M \left(R_{g(t)} -a\left(1-\frac{2a}{n}t\right)^{-1}\right) \varphi(\cdot,t)d\mu_{g(t)}
\end{align} is monotonously increasing with respect to $t$.

Moreover, we also need some estimate for $\varphi(x,t)$. In fact, we have the following lemma, which is the main result of this section:
 \begin{lemma}\label{prop_varphit}
	Let $(M^n,\hat g)$ be a compact smooth Riemannian manifold and $g(t)(t\in[0,T_0])$ be the Ricci flow with initial metric $\hat g$. For any non-positive constant $a$, any $T\in(0,T_0]$ and any  $\tilde \varphi\in C^\infty(M)$, there exists a function $\varphi(x,t)\in C^\infty(M\times [0,T])$, such that
\begin{enumerate}
	\item $\varphi(\cdot,T)=\tilde \varphi(\cdot),\  {\text{on}}\ M$.
	\item  $\int_M \left(R_{g(t)} -a\left(1-\frac{2a}{n}t\right)^{-1}\right) \varphi(\cdot,t)d\mu_{g(t)}$ is monotonously increasing with respect to $t$.
	\item $\varphi(\cdot,t)\le C(n,h,p,\|\hat g\|_{W^{1,p}(M)},\|\tilde\varphi\|_{L^\infty},a)$, $\forall t\in[0,T].$
	\item  $\|\varphi(\cdot,t)\|_{W^{1,\frac{n}{n-1}}(M)}\le C(n,h,p,\|\hat g\|_{W^{1,p}(M)},\|\tilde\varphi\|_{L^\infty},a)$,  $\forall t\in[0,T].$
\end{enumerate}
\end{lemma}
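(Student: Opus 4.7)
The plan is to define $\varphi$ as the solution of a backward linear parabolic equation chosen so that the first-order terms in the evolution of $\int_M (R_g - \rho)\varphi\,d\mu_g$ cancel, and then verify the four properties using standard parabolic techniques together with the $h$-flow estimates in Lemma \ref{thm6.2}. Setting $\rho(t) := a(1-2at/n)^{-1}$, which satisfies $\dot\rho = 2\rho^2/n$ and is bounded by $|a|$ for $a \le 0$ and $t \in [0, T_0]$, I will let $\varphi$ solve the backward Cauchy problem
\begin{align*}
\partial_t \varphi + \Delta_{g(t)} \varphi + \frac{2\rho - (n-2)R_{g(t)}}{n}\,\varphi = 0, \qquad \varphi(\cdot, T) = \tilde\varphi.
\end{align*}
Under the substitution $\tau = T-t$ this becomes a forward linear parabolic equation with smooth coefficients; since $\hat g$ is smooth and $g(t)$ is smooth on $[0,T_0]$, a unique smooth solution exists on $M \times [0,T]$, giving property (1). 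As only the case $\tilde\varphi \ge 0$ is needed in the applications, I will assume this, so that the maximum principle gives $\varphi \ge 0$ throughout.

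For (2), a direct computation using $\partial_t R_g = \Delta R_g + 2|\Ric|^2$, $\partial_t d\mu_g = -R_g\,d\mu_g$, $\dot\rho = 2\rho^2/n$, and the orthogonal decomposition $|\Ric|^2 = R^2/n + |\Ric - (R/n)g|^2$, followed by integration by parts (noting $\int_M \Delta\varphi\,d\mu_g = 0$ and that $\rho$ is $x$-independent), makes the first-order-in-$\varphi$ terms cancel thanks to the chosen potential, yielding the clean identity
\begin{align*}
\frac{d}{dt}\int_M (R_{g(t)} - \rho)\,\varphi\, d\mu_{g(t)} = 2\int_M \bigl|\Ric - (R/n)g\bigr|^2 \varphi\, d\mu_{g(t)} \ge 0.
\end{align*}

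For (3), writing $\psi(\tau) := \varphi(T-\tau)$, the equation becomes $\partial_\tau \psi = \Delta \psi + c\psi$ with $c := (2\rho - (n-2)R_{g(t)})/n$, and the scalar maximum principle gives $\|\psi(\tau)\|_\infty \le \|\tilde\varphi\|_\infty \exp\bigl(\int_0^\tau \|c\|_\infty\, ds\bigr)$. Since $|\rho| \le |a|$ and $\|R_{g(t)}\|_\infty$ is controlled by $|\tilde\nabla^2 g|$, which by Lemma \ref{thm6.2} decays like $t^{-\frac{n}{4p}-\frac{3}{4}}$ with exponent strictly less than $1$ for $p > n$, the integral $\int_0^T \|c\|_\infty\,ds$ is finite with a bound depending only on $n, h, p, \|\hat g\|_{W^{1,p}(M)}, a$; here I use that the Ricci flow and the $h$-flow coincide up to a time-dependent diffeomorphism, so that pointwise sup-norms of $R$ transfer. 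Property (4) then follows by combining the $L^\infty$ bound just obtained with a Bernstein-type gradient estimate on $F := |\nabla \psi|^2$, namely $(\partial_\tau - \Delta)F \le C(|\Ric| + |c|)F + \|\psi\|_\infty^2|\nabla c|^2 - 2|\text{Hess}\,\psi|^2$; together with integrable-in-time bounds on $|\Ric|$ and $|\nabla R|$ coming from Shi-type derivative estimates for the $h$-flow, this gives a uniform bound on $\|\nabla\varphi\|_\infty$, and the $W^{1,n/(n-1)}$ bound follows since $M$ has finite volume.

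The main obstacle is ensuring that every constant depends only on the $W^{1,p}$ norm of $\hat g$ and not on higher smooth norms, since in the application in Section 5 the smooth $\hat g$ arises as a mollification $g_\delta$ of a singular $W^{1,p}$ metric, whose $C^k$ norms blow up as $\delta \to 0$. This forces every curvature bound to be routed through the $h$-flow estimates of Lemma \ref{thm6.2} via the diffeomorphism equivalence, and requires verifying that derivative bounds on $\tilde\nabla^k g$ for $k \le 3$ are all integrable in time with constants depending only on $n, h, p$ and $\|\hat g\|_{W^{1,p}(M)}$.
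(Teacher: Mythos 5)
Your construction differs from the paper's in a harmless and in fact slightly cleaner way for parts (1)--(3): the paper takes $\varphi$ to be $(1-\tfrac{2a}{n}T)^{-2}(1-\tfrac{2a}{n}t)^{2}$ times the conjugate-heat-kernel convolution of $\tilde\varphi$, which solves $\partial_t\varphi=-\Delta\varphi+(R-\tfrac{4}{n}a(t))\varphi$ and gives monotonicity only after an extra Cauchy--Schwarz step, whereas your potential $c=\tfrac{(n-2)R-2\rho}{n}$ makes the derivative equal exactly $2\int|\Ric-(R/n)g|^2\varphi\,d\mu$. I checked the algebra and your identity is correct; note that both constructions (and the lemma as used) implicitly require $\tilde\varphi\ge 0$ so that $\varphi\ge0$, which you state explicitly. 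Your treatment of (3) via the maximum principle and the integrable bound $\|R_{g(t)}\|_\infty\lesssim t^{-\frac{n}{4p}-\frac34}$ matches the paper's argument for the bound on $\int_M K\,d\mu_{g(T)}$.

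There is, however, a genuine gap in your proof of (4). The Bernstein inequality for $F=|\nabla\psi|^2$ carries the inhomogeneous term $\|\psi\|_\infty^2|\nabla c|^2$, and $|\nabla c|$ involves $|\nabla R_{g(t)}|$. Closing the pointwise estimate by the maximum principle requires $\int_0^T\|\nabla R_{g(t)}\|_{L^\infty}^2\,dt<\infty$ with constants depending only on $n,h,p,\|\hat g\|_{W^{1,p}(M)}$. But the available Shi-type decay for these rough flows is $|\nabla\Rm|\lesssim t^{-\frac{n}{4p}-\frac54}$ (one extra factor of $t^{-1/2}$ on top of $|\Rm|\lesssim t^{-\frac{n}{4p}-\frac34}$), so $\|\nabla R\|_\infty^2\sim t^{-\frac{n}{2p}-\frac52}$ is badly non-integrable; no uniform $\|\nabla\varphi\|_\infty$ bound can be extracted this way, and indeed such a bound is stronger than what is true uniformly in the mollification parameter. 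The paper avoids this by working with the integrated energy $E(t)=\int_M|\nabla\psi|^2\,d\mu_{g(t)}$: there the problematic term becomes $\int_M\langle\nabla(R\psi),\nabla\psi\rangle\,d\mu=-\int_M R\psi\,\Delta\psi\,d\mu$, which after Cauchy--Schwarz against the good $-|\Hess\psi|^2$ term leaves only $\int_M R^2\psi^2\,d\mu$, controlled in time by the spacetime bound $\int_0^{T_0}\int_M|\Rm|^2\,d\mu_{g(t)}\,dt\le C(n,\hat g,p)$ of Lemma \ref{thm66.2}(3). You should replace your pointwise gradient estimate by this integrated argument (an $L^2$ gradient bound plus your $L^\infty$ bound already gives the $W^{1,\frac{n}{n-1}}$ bound by H\"older); as written, the step ``this gives a uniform bound on $\|\nabla\varphi\|_\infty$'' does not go through.
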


\begin{remark}
Though Lemma \ref{prop_varphit} only claims results for $a$ non-positve, our proof essentially give similar results for $a$ positve. The matter is that the term $\left(1-\frac{2a}{n}t\right)^{-1}$ in item (2) would divergence to infinity at $t=\frac{2}{2a}$. This divergence phenomenen is expectable by noting that the Ricci flow with the round sphere with scalar curvature $a>0$ as its initial metric collapses to a single point at time $t=\frac{2}{2a}$. However, the case of $a$ non-positve is good enough for our use.
\end{remark}

Now we are going to construct the function $\varphi$ in the lemma above.
Since $(M^n,\hat g)$ is a compact smooth Riemannian manifold, it is known that the heat kernel along its Ricci flow $g(t)(t\in[0,T_0]$ exists. The heat kernel is a funciton $K(y,s;x,t)$, $y,x\in M$, $0\le t<s\le T_0$, such that
\begin{align}
(\partial_s-\Delta_{g(s);y})K(y,s;x,t)=0,\forall\  {\text{fixed}}\  x\in M, t\in (0,T_0],
\end{align}
and 
\begin{align}
\lim_{s\to t^+}K(y,s;x,t)=\delta_x(y), \forall\  {\text{fixed}}\  x\in M,
\end{align}
where $\Delta_{g(s);y}$ denotes the Laplacian taken to the variable $y$ and with respect to the metric $g(s)$, $\delta_x(\cdot)$ denotes the Dirac Delta functional at $x$.

Moreover, 
\begin{align}\label{eqK}
(\partial_t+\Delta_{g(t);x}+R_{g(t)}(x))K(y,s;x,t)=0,\forall\  {\text{fixed}}\  y\in M, s\in (0,T_0],
\end{align}
and 
\begin{align}
\lim_{t\to s^-}K(y,s;x,t)=\delta_y(x), \forall\  {\text{fixed}}\  y\in M,
\end{align}
where $R_{g(t)}$ is the scalar curvature of $g(t)$.

Now, for any  $a\in \mathbb{R}, T\in (0,T_0]$ and $\tilde \varphi\in C^\infty(M)$, we can construct the function $\varphi$ f as follows:
\begin{align}\label{dfnvarphi}
&\varphi: M\times [0,T]\to M,\notag\\
(x,t)\mapsto \left(1-\frac{2a}{n}T\right)^{-2}&\left(1-\frac{2a}{n}t\right)^2\int_M \tilde\varphi(y) K(y,T;x,t)d\mu_{g(T)} (y),
\end{align}

For convenience, we denote:
\begin{align}\label{dfnat}
a(t):=a\left(1-\frac{2a}{n}t\right)^{-1}.
\end{align}

One of the advantages of $\varphi$ defined above is that it satisfies a equation which is useful for our use:
\begin{lemma}
Let $(M,g(t))$ be a Ricci flow, $K(y,s;x,t)$ be the heat kernel along $g(t)$, then
the function $\varphi$ defined in \eqref{dfnvarphi} satisfies
\begin{align}\label{eqphi0}
\partial_t\varphi=-\Delta_{g(t)}\varphi+\left(R_{g(t)}-\frac{4}{n}a(t)\right)\varphi,
\end{align}
where $a(t)$ is the real function defined in \eqref{dfnat}.
\end{lemma}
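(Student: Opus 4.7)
The proof is a direct verification via the product rule. I would write the definition in \eqref{dfnvarphi} as $\varphi(x,t)=C_T\,f(t)\,\Psi(x,t)$, where $C_T:=(1-\tfrac{2a}{n}T)^{-2}$ is a constant, $f(t):=(1-\tfrac{2a}{n}t)^2$, and
\begin{align*}
\Psi(x,t):=\int_M \tilde\varphi(y)\,K(y,T;x,t)\,d\mu_{g(T)}(y).
\end{align*}
Since $M$ is compact and the heat kernel $K$ is jointly smooth on $M\times M\times\{0\le t<s\le T_0\}$, I can differentiate $\Psi$ in $t$ under the integral sign. The product rule then gives $\partial_t\varphi=C_T\,f'(t)\,\Psi+C_T\,f(t)\,\partial_t\Psi$, and it suffices to identify each piece.

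For the prefactor term, I would compute the logarithmic derivative
\begin{align*}
\frac{f'(t)}{f(t)}=\frac{d}{dt}\Bigl[2\log\bigl(1-\tfrac{2a}{n}t\bigr)\Bigr]=\frac{-4a/n}{1-\tfrac{2a}{n}t}=-\tfrac{4}{n}a(t),
\end{align*}
using the definition \eqref{dfnat} of $a(t)$. Hence $C_T\,f'(t)\,\Psi=-\tfrac{4}{n}a(t)\,\varphi$, which is exactly the $-\tfrac{4}{n}a(t)\,\varphi$ term on the right-hand side of \eqref{eqphi0}. This is also the conceptual content of the lemma: the quadratic prefactor $(1-\tfrac{2a}{n}t)^2$ in \eqref{dfnvarphi} was engineered precisely so that its logarithmic derivative produces the correction that will later make $\int_M(R_{g(t)}-a(t))\varphi\,d\mu_{g(t)}$ monotone, as required by item (2) of Lemma \ref{prop_varphit}.

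For the remaining piece, applying \eqref{eqK} with $(y,T)$ treated as fixed parameters rewrites $\partial_t K(y,T;x,t)$ in terms of $\Delta_{g(t);x}K$ and $R_{g(t)}(x)K$. Commuting the $x$-Laplacian past the $y$-integration (justified by smoothness and compactness) yields $\partial_t\Psi=-\Delta_{g(t)}\Psi+R_{g(t)}\Psi$, and multiplying through by $C_T\,f(t)$ gives $C_T\,f(t)\,\partial_t\Psi=-\Delta_{g(t)}\varphi+R_{g(t)}\varphi$. Summing the two contributions produces exactly the identity in \eqref{eqphi0}. There is no real obstacle — the argument is bookkeeping, and the only non-mechanical step is the identification $f'/f=-\tfrac{4}{n}a(t)$, which is the whole point of the particular scaling factor built into \eqref{dfnvarphi}.
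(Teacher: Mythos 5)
Your proposal is correct and follows essentially the same route as the paper: product rule on the prefactor $(1-\tfrac{2a}{n}t)^2$ times the convolution, identification of the logarithmic derivative of the prefactor with $-\tfrac{4}{n}a(t)$, and substitution of the conjugate heat equation for $\partial_t K$ followed by commuting $\Delta_{g(t);x}$ with the $y$-integral. The only caveat is that \eqref{eqK} as printed would literally give $\partial_t K=-\Delta_{g(t);x}K-R_{g(t)}K$; you (like the paper's own computation) use the version $\partial_t K=-\Delta_{g(t);x}K+R_{g(t)}K$, which is the correct conjugate heat equation, so your argument is fine.
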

\begin{proof}
Since 
\begin{align}
\varphi(x,t)=\left(1-\frac{2a}{n}T\right)^{-2}\left(1-\frac{2a}{n}t\right)^2\int_M \tilde\varphi(y) K(y,T;x,t)d\mu_{g(T)} (y),
\end{align}

we have
\begin{align}\label{partialtphi}
&\partial_t\varphi(x,t)\notag\\
=&\left(1-\frac{2a}{n}T\right)^{-2}\partial_t \left(1-\frac{2a}{n}t\right)^2\int_M \tilde\varphi(y) K(y,T;x,t)d\mu_{g(T)} (y)\notag\\
&+\left(1-\frac{2a}{n}T\right)^{-2}\left(1-\frac{2a}{n}t\right)^2\partial_t \int_M \tilde\varphi(y) K(y,T;x,t)d\mu_{g(T)} (y)\notag\\
=&-\left(1-\frac{2a}{n}T\right)^{-2}\frac{4a}{n} \left(1-\frac{2a}{n}t\right)\int_M \tilde\varphi(y) K(y,T;x,t)d\mu_{g(T)} (y)\notag\\
&+\left(1-\frac{2a}{n}T\right)^{-2}\left(1-\frac{2a}{n}t\right)^2\partial_t \int_M \tilde\varphi(y) K(y,T;x,t)d\mu_{g(T)} (y)
\notag\\
=&-\frac{4}{n} a(t)\varphi(x,t)\notag\\
&+\left(1-\frac{2a}{n}T\right)^{-2}\left(1-\frac{2a}{n}t\right)^2\partial_t \int_M \tilde\varphi(y) K(y,T;x,t)d\mu_{g(T)} (y).
\end{align}

By \eqref{eqK}, we have
\begin{align}
\partial_t K(y,T;x,t)=(-\Delta_{g(t);x}+R_{g(t)}(x))K(y,T;x,t).
\end{align}

Thus \eqref{partialtphi} becomes
\begin{align}\label{partialtphi2}
\partial_t\varphi(x,t)
=-\frac{4}{n} a(t)\varphi(x,t)+\left(1-\frac{2a}{n}T\right)^{-2}\left(1-\frac{2a}{n}t\right)^2\int_M \tilde\varphi(y) (-\Delta_{g(t);x}+R_{g(t)}(x))K(y,T;x,t)d\mu_{g(T)} (y).
\end{align}

On the other hand, we have
\begin{align}\label{partialtphi3}
\Delta_{g(t)}\varphi(x,t)
&=\left(1-\frac{2a}{n}T\right)^{-2}\left(1-\frac{2a}{n}t\right)^2\int_M \tilde\varphi(y) \Delta_{g(t)} K(y,T;x,t)d\mu_{g(T)} (y).
\end{align}

Combining \eqref{partialtphi2} and \eqref{partialtphi3}, we have
\begin{align}\label{partialtphi4}
(\partial_t+\Delta_{g(t)})\varphi(x,t)
&=-\frac{4}{n} a(t)\varphi(x,t)+\left(1-\frac{2a}{n}T\right)^{-2}\left(1-\frac{2a}{n}t\right)^2\int_M \tilde\varphi(y) R_{g(t)}(x)K(y,T;x,t)d\mu_{g(T)} (y)\notag\\
&=-\frac{4}{n} a(t)\varphi(x,t)+R_{g(t)}(x)\left(1-\frac{2a}{n}T\right)^{-2}\left(1-\frac{2a}{n}t\right)^2\int_M \tilde\varphi(y) (x)K(y,T;x,t)d\mu_{g(T)} (y)\notag\\
&=-\frac{4}{n} a(t)\varphi(x,t)+R_{g(t)}(x)\varphi(x,t),
\end{align}
which is just the equation \eqref{eqphi0}, thus the lemma is proved.
\end{proof}

Now we can prove Lemma \ref{prop_varphit}:
\begin{proof}[proof of Lemma \ref{prop_varphit}]
Let $K(y,s;x,t)$ be the heat kernel along $g(t)$, and $\varphi$ be the function defined in \eqref{dfnvarphi}.

\textbf{For (1)}, it is known that the convolution $\int_M \tilde\varphi(y) (x)K(y,T;x,t)d\mu_{g(T)} (y)$ is smooth and
\begin{align}
\int_M \tilde\varphi(y) (x)K(y,T;x,t)d\mu_{g(T)} (y)\Big|_{t=T}=\tilde\varphi(x).
\end{align}

On the other hand, we have
\begin{align}
\left(1-\frac{2a}{n}T\right)^{-2}\left(1-\frac{2a}{n}t\right)^2|_{t=T}=1.
\end{align}

Thus by \eqref{dfnvarphi} we have
\begin{align}
\varphi(x,T)=1\tilde\varphi(x)=\tilde\varphi(x),
\end{align}
which proves (1).

\textbf{For (2)}, we calculate that
\begin{align}\label{mnt1}
&\partial_t \int_M \left(R_{g(t)} -a\left(1-\frac{2a}{n}t\right)^{-1}\right) \varphi(x,t)d\mu_{g(t)}\notag\\
=&\partial_t \int_M \left(R_{g(t)} -a(t)\right) \varphi(x,t)d\mu_{g(t)}\notag\\
=& \int_M \left(\left(\partial_tR_{g(t)} -a'(t)\right) \varphi(x,t)+\left(R_{g(t)} -a'(t)\right) \partial_t\varphi(x,t)\right)d\mu_{g(t)}+\int_M \left(R_{g(t)} -a(t)\right) \varphi(x,t)\partial_td\mu_{g(t)}.
\end{align}

And we have
\begin{align}
a'(t)=\frac{2a^2}{n}\left(1-\frac{2a}{n}t\right)^{-2}=\frac{2}{n}a^2(t).
\end{align}

Recall $g(t)$ is a Ricci flow, we have the standard evolution equation
\begin{align}
\partial_t R_{g(t)}=\Delta_{g(t)}R_{g(t)}+2|\Ric_{g(t)}|_{g(t)}^2,
\end{align}
and 
\begin{align}\label{mnt4}
\partial_t d\mu_{g(t)}=-R_{g(t)}d\mu_{g(t)},
\end{align}

Combining \eqref{mnt1}-\eqref{mnt4} and the evolution equation of $\varphi$, \eqref{eqphi0}, we have
\begin{align}\label{mnt5}
&\partial_t \int_M \left(R_{g(t)} -a\left(1-\frac{2a}{n}t\right)^{-1}\right) \varphi(x,t)d\mu_{g(t)}\notag\\
=& \int_M \left(\left(\Delta_{g(t)}R_{g(t)}+2|\Ric_{g(t)}|_{g(t)}^2 -\frac{2}{n}a^2(t)\right) \varphi(x,t)+\left(R_{g(t)} -a(t)\right)\left( -\Delta_{g(t)}\varphi+\left(R_{g(t)}-\frac{4}{n}a(t)\right)\varphi(x,t)\right)\right)d\mu_{g(t)}\notag\\
&+\int_M \left(R_{g(t)} -a(t)\right) \varphi(x,t)\left(-R_{g(t)}\right) d\mu_{g(t)}\notag\\
=& \int_M \left(\left(\Delta_{g(t)}R_{g(t)}\varphi(x,t)-R_{g(t)}\Delta_{g(t)}\varphi(x,t)\right)+\left(2|\Ric_{g(t)}|_{g(t)}^2 -\frac{2}{n}a^2(t)-\frac{4}{n}a(t)R_{g(t)}+\frac{4}{n}a^2(t)\right) \varphi(x,t)\right)d\mu_{g(t)}\notag\\
&+\int_M \left(R_{g(t)} -a(t)\right) \varphi(x,t)\left(-R_{g(t)}\right) d\mu_{g(t)}+\int_M \left(R_{g(t)} -a(t)\right) \varphi(x,t)R_{g(t)}d\mu_{g(t)}\notag\\
&+\int_M a(t)\Delta_{g(t)} \varphi(x,t)Rd\mu_{g(t)}\notag\\
=& \int_M \left(\left(\Delta_{g(t)}R_{g(t)}\varphi(x,t)-R_{g(t)}\Delta_{g(t)}\varphi(x,t)\right)+\left(2|\Ric_{g(t)}|_{g(t)}^2 -\frac{4}{n}a(t)R_{g(t)}+\frac{2}{n}a^2(t)\right) \varphi(x,t)\right)d\mu_{g(t)}\notag\\
&+\int_M a(t)\Delta_{g(t)} \varphi(x,t)Rd\mu_{g(t)}.
\end{align}

By integral by part, we have
\begin{align}
\int_M \left(\Delta_{g(t)}R_{g(t)}\varphi(x,t)-R_{g(t)}\Delta_{g(t)}\varphi(x,t)\right)d\mu_{g(t)}=0,
\end{align}
and 
\begin{align}\label{mnt7}
\int_M a(t)\Delta_{g(t)} \varphi(x,t)Rd\mu_{g(t)}=a(t)\int_M \Delta_{g(t)} \varphi(x,t)Rd\mu_{g(t)}=0.
\end{align}

Combining \eqref{mnt5}-\eqref{mnt7}, we have
\begin{align}\label{mnt8}
&\partial_t \int_M \left(R_{g(t)} -a\left(1-\frac{2a}{n}t\right)^{-1}\right) \varphi(x,t)d\mu_{g(t)}\notag\\
=& \int_M \left(\left(2|\Ric_{g(t)}|_{g(t)}^2 -\frac{4}{n}a(t)R_{g(t)}+\frac{2}{n}a^2(t)\right) \varphi(x,t)\right)d\mu_{g(t)}.
\end{align}

By Cauchy inequality and a direct calculus using a special coordinate, one has
\begin{align}
2|\Ric_{g(t)}|_{g(t)}^2\ge \frac{2}{n}R_{g(t)}^2.
\end{align}

By mean value inequality, we have
\begin{align}\label{mnt11}
-\frac{4}{n}a(t)R_{g(t)}\ge -\frac{2}{n}R_{g(t)}^2-\frac{2}{n}a^2(t)
\end{align}

Combining \eqref{mnt8}-\eqref{mnt11}, we have
\begin{align}\label{mnt12}
&\partial_t \int_M \left(R_{g(t)} -a\left(1-\frac{2a}{n}t\right)^{-1}\right) \varphi(x,t)d\mu_{g(t)}\notag\\
\ge & \int_M \left(\left(\frac{2}{n}R_{g(t)}^2-\frac{2}{n}R_{g(t)}^2 -\frac{2}{n}a^2(t)+\frac{2}{n}a^2(t)\right) \varphi(x,t)\right)d\mu_{g(t)}\notag\\
= & 0.
\end{align}

Thus $\int_M \left(R_{g(t)} -a\left(1-\frac{2a}{n}t\right)^{-1}\right) \varphi(\cdot,t)d\mu_{g(t)}$ is monotonously increasing with respect to $t$ and (2) is proved.

\textbf{For (3)},   by \eqref{dfnvarphi} we have
\begin{align}\label{phi64}
\varphi(t,x)\leq  \left(1-\frac{2a}{n}T\right)^{-2}\left(1-\frac{2a}{n}t\right)^2\|\tilde\varphi\|_{L^\infty}\int_M K(y,T;x,t)d\mu_{g(T)} (y).
\end{align}
We denote $I(t,T)=\int_M  K(y,T;x,t)d\mu_{g(T)} (y)$, then we have $\lim_{T\to t^+}I(t,T)=1$, and by the standard evolution equation $\partial_T d\mu_{g(T)} =-R_{g(T)}d\mu_{g(T)}$, we have
\begin{align}\label{p1}
\partial_TI(t,T)=\int_M\left(\Delta_{g(T);y} K(y,T;x,t)-R_{g(T)} K(y,T;x,t)\right)d\mu_{g(T)} (y).
\end{align}

By divergence theorem, we have 
\begin{align}\label{p2}
\int_M \Delta_y K(y,T;x,t) d\mu_{g(T)} (y)=0.
\end{align}

Combining Lemma \ref{mainsec6} (2), \eqref{p1} and \eqref{p2}, we have 
\begin{align}\label{p3}
\partial_T I(t,T)\le \frac{C(n,h,p,\|\hat g\|_{W^{1,p}(M)})}{T^{\frac{n}{4p}+\frac{3}{4}}}I(t,T).
\end{align}

Since $\lim_{T\to t^+}I(t,T)=1$ and ${\frac{n}{4p}+\frac{3}{4}}\in (0,1)$, by taking integral we have
\begin{align}\label{phi65}
I(t,T)\le C(n,h,p,\|\hat g\|_{W^{1,p}(M)}),\forall 0\le t<T\le T_0.
\end{align}
By (\ref{phi64}) and (\ref{phi65}), we have
\begin{align}\label{philinf1}
\varphi(t,x)\le C(n,h,p,\|\hat g\|_{W^{1,p}(M)},\|\tilde\varphi\|_{L^\infty})\left(1-\frac{2a}{n}T\right)^{-2}\left(1-\frac{2a}{n}t\right)^2,\forall t\in[0,T].
\end{align}

Let us estimate $\left(1-\frac{2a}{n}T\right)^{-2}\left(1-\frac{2a}{n}t\right)^2$. 

If $a>0$, then we have assumed $t\le T\le T_0<\frac{n}{2a}$ without loss of generality. In this case $\left(1-\frac{2a}{n}t\right)^2$ is monotonously increasing with respect to $t$, thus 
\begin{align}
\left(1-\frac{2a}{n}T\right)^{-2}\left(1-\frac{2a}{n}t\right)^2\le 1,\ {\text{if}}\ a>0
\end{align}

If $a<0$, then we have
\begin{align}
\left(1-\frac{2a}{n}T\right)^{-2}\left(1-\frac{2a}{n}t\right)^2\le \left(1\right)^{-2} \left(1-\frac{2a}{n}T_0\right)^2\le C(a,T_0)=C(n,h,p,\|\hat g\|_{W^{1,p}(M)},a),\  {\text{if}}\ a<0
\end{align}

If $a=0$, then we have
\begin{align}
\left(1-\frac{2a}{n}T\right)^{-2}\left(1-\frac{2a}{n}t\right)^2\equiv1,\  {\text{if}}\ a=0
\end{align}

In all cases, we always have
\begin{align}\label{philinf2}
\left(1-\frac{2a}{n}T\right)^{-2}\left(1-\frac{2a}{n}t\right)^2\le C(n,h,p,\|\hat g\|_{W^{1,p}(M)},a).
\end{align}

Combining \eqref{philinf1} and \eqref{philinf2}, we have
\begin{align}
\varphi(x,t)\le C(n,h,p,\|\hat g\|_{W^{1,p}(M)},\|\tilde\varphi\|_{L^\infty},a), \forall (x,t)\in M\times[0,T],
\end{align}
which proves (3).

\textbf{For (4)}, we consider a simpler function 
\begin{align}
\psi(x,t):=\int_M \tilde\varphi(y) K(y,T;x,t)d\mu_{g(T)} (y)=\left(1-\frac{2a}{n}T\right)^2\left(1-\frac{2a}{n}t\right)^{-2}\varphi(x,t),
\end{align}
firstly.

Now we want to estimate $\int_M | \nabla_{g(t)} \psi(\cdot,t)|^2_{g(t)}d\mu_{g(t)}$, which we denote as $E(t)$. We calculate that
\begin{align}\label{a6.6}
\partial_t E(t)
=&\partial_t \int_M \psi(\cdot,t)_i\psi(\cdot,t)_j g(t)^{ij} d\mu_{g(t)}\notag\\
=&2\int_M \partial_t \psi(\cdot,t)_i\psi(\cdot,t)_j g(t)^{ij} d\mu_{g(t)}+\int_M \psi(\cdot,t)_i\psi(\cdot,t)_j \partial_t g(t)^{ij} d\mu_{g(t)}+\int_M | \nabla_{g(t)} \psi(\cdot,t)|_{g(t)}^2 \partial_t d\mu_{g(t)} \notag\\
=&\int_M \left(2 \langle   \nabla_{g(t)} \partial_t \psi(\cdot,t),  \nabla_{g(t)} \psi(\cdot,t)\rangle_{g(t)}\right)d\mu_{g(t)}+\int_M \psi(\cdot,t)_i\psi(\cdot,t)_j \partial_t g(t)^{ij} d\mu_{g(t)}+\int_M (-R_{g(t)}) | \nabla_{g(t)} \psi(\cdot,t)|_{g(t)}^2d\mu_{g(t)}.
\end{align}

From the Ricci flow equation
\begin{align}
\partial_t g(t)_{ij}=-2\Ric_{g(t);ij},
\end{align}
we have
\begin{align}
\partial_t g(t)^{ij}=2\Ric_{g(t)}^{ij}.
\end{align}

Thus \eqref{a6.6} becomes
\begin{align}\label{a6.71}
&\partial_t E(t)\notag\\
=&\int_M \left(2 \langle   \nabla_{g(t)} \partial_t \psi(\cdot,t),  \nabla_{g(t)} \psi(\cdot,t)\rangle_{g(t)}+2\Ric_{g(t)}( \nabla_{g(t)} \psi(\cdot,t),  \nabla_{g(t)} \psi(\cdot,t))-R_{g(t)} | \nabla_{g(t)} \psi(\cdot,t)|_{g(t)}^2\right)d\mu_{g(t)}.
\end{align}

By \eqref{dfnvarphi}, we have
\begin{align}\label{a671}
&\int_M \langle   \nabla_{g(t)} \partial_t \psi(\cdot,t),  \nabla_{g(t)} \psi(\cdot,t)\rangle_{g(t)} d\mu_{g(t)}\notag\\
=&\int_M -\langle   \nabla_{g(t)} (\Delta_{g(t)} \psi(\cdot,t)-R_{g(t)}  \psi(\cdot,t)),  \nabla_{g(t)} \psi(\cdot,t)\rangle_{g(t)} d\mu_{g(t)} \notag\\ 
=&\int_M \left(-\langle \nabla_{g(t)} \Delta_{g(t)} \psi(\cdot,t) ,\nabla_{g(t)} \psi(\cdot,t)\rangle_{g(t)} +\langle\nabla_{g(t)}(R_{g(t)} \psi(\cdot,t)),\nabla_{g(t)}\psi(\cdot,t)\rangle_{g(t)}\right)d\mu_{g(t)}.
\end{align}

Using the Bochner formula, \eqref{a671} becomes
\begin{align}\label{a672}
&\int_M \langle   \nabla_{g(t)} \partial_t \psi(\cdot,t),  \nabla_{g(t)} \psi(\cdot,t)\rangle_{g(t)} d\mu_{g(t)}\notag\\
=&\int_M \Big(-\frac{1}{2}\Delta_{g(t)}|\nabla_{g(t)}\psi(\cdot,t)|_{g(t)}^2+|\nabla_{g(t)}^2\psi(\cdot,t)|_{g(t)}^2+\Ric_{g(t)}(\nabla_{g(t)}\psi(\cdot,t),\nabla_{g(t)}\psi(\cdot,t))\notag \\ 
&\qquad\qquad+\langle\nabla_{g(t)}(R_{g(t)} \psi(\cdot,t)),\nabla_{g(t)}\psi(\cdot,t)\rangle_{g(t)}\Big)d\mu_{g(t)}.
\end{align}

By integral by part, we have
\begin{align}\label{a6721}
\int_M \frac{1}{2}\Delta_{g(t)}|\nabla_{g(t)}\psi(\cdot,t)|_{g(t)}^2d\mu_{g(t)}=0.
\end{align}

Combining  \eqref{a672} and \eqref{a6721} we have
\begin{align}\label{a673}
&\int_M \langle   \nabla_{g(t)} \partial_t \psi(\cdot,t),  \nabla_{g(t)} \psi(\cdot,t)\rangle_{g(t)} d\mu_{g(t)}\notag\\
=&\int_M \left(\left(|\nabla_{g(t)}^2\psi(\cdot,t)|_{g(t)}^2+\Ric_{g(t)}(\nabla_{g(t)}\psi(\cdot,t),\nabla_{g(t)}\psi(\cdot,t))\right)-R_{g(t)} \psi(\cdot,t)\Delta_{g(t)}\psi(\cdot,t)\right)d\mu_{g(t)}.
\end{align}

Since $|\Delta_{g(t)}\psi(\cdot,t)|_{g(t)}^2\le C_1(n)|\nabla_{g(t)}^2\psi(\cdot,t)|^2$, using Cauchy inequality, we have
\begin{align}\label{a674}
\int_M \left(-R_{g(t)} \psi(\cdot,t)\Delta_{g(t)}\psi(\cdot,t)\right)d\mu_{g(t)}
&\ge  -\frac{1}{2C_1(n)}\int_M |\Delta_{g(t)}\psi(\cdot,t)|_{g(t)}^2 d\mu_{g(t)}
-\frac{C_1(n)}{2}\int_M R_{g(t)} ^2\psi(\cdot,t)^2 d\mu_{g(t)}\notag\\
&\ge  -\frac{1}{2}\int_M |\nabla_{g(t)}^2\psi(\cdot,t)|^ d\mu_{g(t)}
-\frac{C_1(n)}{2}\int_M R_{g(t)} ^2\psi(\cdot,t)^2 d\mu_{g(t)}
\end{align}

Combining \eqref{a673} and \eqref{a674} we have
\begin{align}\label{a6.8}
 \int_M \langle   \nabla_{g(t)} \partial_t \psi(\cdot,t),  \nabla_{g(t)} \psi(\cdot,t)\rangle_{g(t)} d\mu_{g(t)}\ge \int_M \left(\Ric_{g(t)}(\nabla_{g(t)}\psi(\cdot,t),\nabla_{g(t)}\psi(\cdot,t))-C(n)R_{g(t)} ^2\psi(\cdot,t)^2\right)d\mu_{g(t)}.
\end{align}

Combining \eqref{a6.71} and \eqref{a6.8}, and by Cauchy inequality (see also \cite{LLZ23}, \cite{SHC23}), we have 
\begin{align}\label{aab1}
\partial_tE(t)&\ge \int_M \left(4\Ric_{g(t)}(\nabla_{g(t)}\psi(\cdot,t),\nabla_{g(t)}\psi(\cdot,t))-R_{g(t)}|\nabla_{g(t)} \psi(\cdot,t)|_{g(t)}^2 -C(n)R_{g(t)}^2\psi(\cdot,t)^2\right)d\mu_{g(t)}\notag \\ 
&\ge \int_M \left(4|\Ric_{g(t)}|_{g(t)}-R_{g(t)}\right)|\nabla_{g(t)}\psi(\cdot,t)|^2d\mu_{g(t)}-C(n)\int_M R_{g(t)}^2\psi(\cdot,t)^2d\mu_{g(t)}.
\end{align}

By Cauchy inequality and a direct calculus using a special coordinate, one has
\begin{align}\label{aab2}
2|\Ric_{g(t)}|_{g(t)}^2\ge \frac{2}{n}R_{g(t)}^2.
\end{align}

By \eqref{aab1} and \eqref{aab2}, we have
\begin{align}
\partial_tE(t)\ge - \int_M |\Ric_{g(t)}||\nabla_{g(t)}\psi(\cdot,t)|^2d\mu_{g(t)}-C(n)\int_M R_{g(t)} ^2\psi(\cdot,t)^2d\mu_{g(t)}.
\end{align}
By Lemma \ref{thm66.2} (2) and Lemma \ref{prop_varphit} (1) proved above, we have
\begin{align*}
\partial_t E(t)\ge -\frac{C(n,h,p,\|\hat g\|_{W^{1,p}(M)})}{t^{\frac{n}{4p}+\frac{3}{4}}}\int_M |\nabla_{g(t)} \psi(\cdot,t)|^2d\mu_{g(t)}-C(n,h,p,\|\hat g\|_{W^{1,p}(M)},\tilde\varphi,a)\int_M R_{g(t)} ^2d\mu_{g(t)}.
\end{align*}

 In order to avoid the vanishing case, we consider $E(t)+1$, and we have
\begin{align*}
\partial_t\left(E(t)+1\right)\ge -\frac{C(n,h,p,\|\hat g\|_{W^{1,p}(M)})}{t^{\frac{n}{4p}+\frac{3}{4}}}\left(E(t)+1\right)-C(n,h,p,\|\hat g\|_{W^{1,p}(M)},\tilde\varphi,a)\int_M R_{g(t)} ^2d\mu_{g(t)}.
\end{align*}

Dividing both sides by $\left(E(t)+1\right)$, we have
\begin{align*}
\partial_t \log\left(E(t)+1\right)\ge -\frac{C(n,h,p,\|\hat g\|_{W^{1,p}(M)})}{t^{\frac{n}{4p}+\frac{3}{4}}}-C(n,h,p,\|\hat g\|_{W^{1,p}(M)},\tilde\varphi,a)\int_M R_{g(t)} ^2d\mu_{g(t)}.
\end{align*}

 By Lemma \ref{thm66.2} (3), $\int_M R_{g(t)} ^2d\mu_{g(t)}$ is integrable on $(0,T)$ and the integral is controlled by $C(n,h,p,\|\hat g\|_{W^{1,p}(M)})$. Thus taking integral we have
\begin{align}\label{E1}
\log\left(E(T)+1\right)-\log\left(E(t)+1\right)\ge C(n,h,p,\|\hat g\|_{W^{1,p}(M)},\tilde\varphi,a), \forall t\in[0,T].
\end{align}

 Recall that $\psi_T=\tilde\varphi$, since Ricci flow and $h$-flow are equivalent after a family of diffeomorphisms, we have 
\begin{align}
E(T)=\int_M | \nabla_{g(T)} \tilde\varphi(\cdot)|^2_{g(T)}d\mu_{g(T)}=\int_M | \nabla_{\bar g(T)} \tilde\varphi(\cdot)|^2_{\bar g(T)}d\mu_{\bar g(T)},
\end{align}
where $\bar g(T)$ is the metric at time $T$ of the $h$-flow $\bar g(t)$ given in Lemma \ref{thm6.2}. By Lemma \ref{thm6.2}, $h$ is $1+\epsilon(n)$-fair to $\bar g(t)$, thus we have
\begin{align}\label{E2}
E(T)\le C(n)\int_M | \nabla_h \tilde\varphi(\cdot)|^2_hd\mu_h\le C(n,h,\tilde\varphi)
\end{align}

Combining \eqref{E1} and \eqref{E2}, we have
\begin{align}\label{E3}
E(t)\le C(n,h,p,\|\hat g\|_{W^{1,p}(M)},\tilde\varphi,a), \forall t\in[0,T].
\end{align}

By Lemma \ref{prop_varphit}, \eqref{E3} and H\"older inequality, we have
\begin{align}\label{E4}
\|\varphi(\cdot,t)\|_{W^{1,\frac{p}{p-1}}(M)}\le \|\varphi(\cdot,t)\|_{C^0(M)}+C(n,h) E^{1/2}(t)\le C(n,h,p,\|\hat g\|_{W^{1,p}(M)},\tilde\varphi,a), \forall t\in[0,T].
\end{align}
which proves (4), thus the lemma is proved.
\end{proof}

\section{Ricci flow and scalar curvature lower bounds}
In this section, we study the scalar curvature lower bounds along Ricci flow. The main result in this section is:
\begin{lemma}\label{mthm2}
Let $M^n$ be a compact smooth manifold with a metric $\hat g\in W^{1,p}(M)$ $(n< p\le \infty)$. Suppose $R_{\hat g}\ge a$ in distirbutional sense for some constant $a$, and let $g(t),t\in(0,T_0]$ be the Ricci flow with initial metric $\hat g$. Then for any $t\in(0,T_0]$, there holds $R_{g(t)}\ge a\left(1-\frac{2a}{n}t\right)^{-1}$ pointwisely on $M$. 
\end{lemma}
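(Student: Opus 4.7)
The plan is to test the desired inequality against arbitrary non-negative smooth functions and exploit the auxiliary function from Lemma \ref{prop_varphit}. Fix $T\in(0,T_0]$ and a non-negative $\tilde\varphi\in C^\infty(M)$; since $R_{g(T)}$ is smooth for $T>0$, it suffices to prove
\[
\int_M \left(R_{g(T)} - a\left(1-\tfrac{2a}{n}T\right)^{-1}\right) \tilde\varphi \, d\mu_{g(T)} \ge 0,
\]
whence the pointwise inequality follows by the arbitrariness of $\tilde\varphi$. For any small $t_0>0$, the metric $g(t_0)$ is smooth (Lemma \ref{mainsec6}) and $\|g(t_0)\|_{W^{1,p}}$ is controlled by $\|\hat g\|_{W^{1,p}}$ uniformly in $t_0$ via Lemma \ref{thm6.2}(1) and the equivalence with the $h$-flow. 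Consequently, Lemma \ref{prop_varphit} applies to the restricted smooth flow $g(t)$ on $[t_0,T]$ with constants uniform in $t_0$. Writing $a(t):=a(1-\frac{2a}{n}t)^{-1}$ and denoting by $\varphi(\cdot,t)$ the resulting auxiliary function (so that $\varphi(\cdot,T)=\tilde\varphi$), $\varphi(\cdot,t)\ge 0$ as it is a positive-prefactor convolution of $\tilde\varphi\ge 0$ against the heat kernel. The monotonicity in Lemma \ref{prop_varphit}(2) gives
\[
\int_M (R_{g(T)} - a(T))\tilde\varphi\, d\mu_{g(T)} \;\ge\; \int_M (R_{g(t_0)} - a(t_0))\varphi(\cdot,t_0)\, d\mu_{g(t_0)}.
\]

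The main step is to prove the right-hand side is asymptotically non-negative as $t_0\to 0^+$. Because $g(t_0)$ is smooth, $\int_M R_{g(t_0)}\varphi(\cdot,t_0)\,d\mu_{g(t_0)}=\langle R_{g(t_0)},\varphi(\cdot,t_0)\rangle$ as a distributional pairing. Using Lemma \ref{lm2.2} with the $W^{1,p}$-convergence $g(t_0)\to\hat g$ (via the $h$-flow) together with the uniform $W^{1,n/(n-1)}$-bound on $\varphi(\cdot,t_0)$ from Lemma \ref{prop_varphit}(4), $\langle R_{g(t_0)}-R_{\hat g},\varphi(\cdot,t_0)\rangle\to 0$. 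Since $\varphi(\cdot,t_0)\ge 0$ is smooth and $R_{\hat g}\ge a$ distributionally, $\langle R_{\hat g},\varphi(\cdot,t_0)\rangle \ge a\int_M \varphi(\cdot,t_0)\,d\mu_{\hat g}$. Therefore
\[
\int_M (R_{g(t_0)}-a(t_0))\varphi(\cdot,t_0)\,d\mu_{g(t_0)} \;\ge\; a\int_M \varphi(\cdot,t_0)\,(d\mu_{\hat g}-d\mu_{g(t_0)}) + (a-a(t_0))\int_M \varphi(\cdot,t_0)\,d\mu_{g(t_0)} + o(1).
\]
The first error vanishes as $t_0\to 0^+$ by the $C^0$-convergence $g(t_0)\to\hat g$ (implied by $W^{1,p}$-convergence for $p>n$) paired with the uniform $L^\infty$-bound in Lemma \ref{prop_varphit}(3); the second since $a(t_0)\to a$.

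Taking $\liminf_{t_0\to 0^+}$ yields $\int_M (R_{g(T)}-a(T))\tilde\varphi\,d\mu_{g(T)}\ge 0$; arbitrariness of the non-negative smooth $\tilde\varphi$ together with smoothness of $R_{g(T)}$ then give the desired pointwise bound $R_{g(T)}\ge a(T)$. The principal obstacle I anticipate is the combined uniformity argument: the constants in Lemma \ref{prop_varphit} must remain bounded as $t_0\to 0^+$, and the $W^{1,p}$-convergence $g(t_0)\to\hat g$ needed to invoke Lemma \ref{lm2.2} must be available along the Ricci flow and not merely along the $h$-flow, which requires transporting the convergence through the diffeomorphism connecting the two flows. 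Both points ultimately rest on Lemma \ref{thm6.2} and the $h$-flow/Ricci flow equivalence recorded at the end of Section~2.
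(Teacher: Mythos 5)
Your strategy differs from the paper's in one essential respect: you smooth the initial data by running the flow itself to a small positive time $t_0$ and then let $t_0\to 0^+$, whereas the paper mollifies $\hat g$ into a family of smooth metrics $\hat g_\delta$ (Lemma \ref{lm2.1}), runs the smooth Ricci flows $g_\delta(t)$ from $t=0$, applies the monotonicity of Lemma \ref{prop_varphit} all the way down to $t=0$, and then lets $\delta\to 0^+$. The reason the paper takes the longer route is precisely the point you flag as your ``principal obstacle'' but do not resolve: your argument needs $g(t_0)\to\hat g$ \emph{strongly} in $W^{1,p}$ as $t_0\to 0^+$ in order to invoke Lemma \ref{lm2.2}, and nothing in the paper supplies this. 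Lemma \ref{thm6.2}(1) gives only a uniform bound $\int_M|\tilde\nabla g(t)|^p\,d\mu_h\le 10\int_M|\tilde\nabla\hat g|^p\,d\mu_h$, and Lemma \ref{thm2.2}(1) gives only $C^0$ convergence; together these yield at best weak $W^{1,p}$ subconvergence, which is insufficient because the distributional scalar curvature pairing contains terms quadratic in $\tilde\nabla g$ (the $\Gamma\Gamma$ products in $F$) that do not pass to the limit under weak convergence. On top of this, even the available convergence statements are for the $h$-flow, and transporting them to the Ricci flow requires controlling the connecting diffeomorphisms as $t\to 0^+$, which you also note but leave open. The mollification route avoids both difficulties at a stroke: $\hat g_\delta\to\hat g$ in $W^{1,p}$ holds by construction, so Lemma \ref{lm2.2} applies directly to compare $\int_M R_{\hat g_\delta}\varphi_\delta(\cdot,0)\,d\mu_{\hat g_\delta}$ with $\langle R_{\hat g},\varphi_\delta(\cdot,0)\rangle\ge a\int_M\varphi_\delta(\cdot,0)\,d\mu_{\hat g}$, and the smooth flows $g_\delta(T)$ converge smoothly to $g(T)$ at the fixed positive time $T$ by Simon's estimates.

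The remainder of your argument (testing against non-negative $\tilde\varphi$, non-negativity of $\varphi(\cdot,t)$ from the heat-kernel representation, uniformity of the constants in Lemma \ref{prop_varphit} as $t_0\to 0^+$, and the $a(t_0)\to a$ bookkeeping) is sound and parallels the paper's use of \eqref{liftphi}--\eqref{aaa1}. But as written the proof is incomplete: either supply a proof of strong $W^{1,p}$ convergence of the flow to its $W^{1,p}$ initial data (a nontrivial statement in its own right, of the type proved by Chu--Lee for related flows), or switch to the mollification of the initial metric, for which the needed convergence is built in.
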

\begin{proof}
We will mollify the initial metric $\hat g$ firstly, and consider a family of flows with the mollifying metrics as their initial metrics.

By Lemma \ref{rmkh}, we can take a smooth metric $h_1$ on $M$, such that $h_1$ is $2$-fair to $\hat g$.

Since $p>n$, by Sobolev embedding we have $\hat g\in C^0(M)$. Let $\hat g_\delta$ be the family of smooth metrics constructed in Lemma \ref{lm2.1}, such that $\hat g_\delta$ converges to $\hat g$ in $W^{1,p}$-norm taken with respect to $h_1$. To be precisely, we have
\begin{align}\label{w1ph1}
\lim_{\delta\to 0^+}\|\hat g_\delta-\hat g\|_{W^{1,p}(M);h_1}=0,
\end{align} 
where $\|\cdot\|_{W^{1,p}(M);h_1}$
is the $W^{1,p}$ norm for tensors on $M$ taken with repsect to $h_1$.

Then by Sobolev embedding, there exists $\delta_0\ge0$, such that,
\begin{align}
\|\hat g_\delta-\hat g\|_{C^0(M);h_1}\le \min\{\frac{\epsilon(n)}{8},\frac{1}{4}\},
\forall \delta\in(0,\delta_0],
\end{align}
where $\|\cdot\|_{C^0(M);h_1}$
is the $C^0$ norm for tensors on $M$ taken with repsect to $h_1$, and $\epsilon(n)$ is the dimensional constant given in Lemma \ref{thm66.2}.

Since $h_1$ is $2$-fair to $\hat g$, we have
\begin{align}
\|\hat g_\delta-\hat g\|_{C^0(M);\hat g}\le \min\{\frac{\epsilon(n)}{4},\frac{1}{2}\},
\forall \delta\in(0,\delta_0],
\end{align}

Since $\|\hat g_{\delta_0}-\hat g\|_{C^0(M);\hat g}\le  \frac{1}{2}$, we have
\begin{align}
\|\hat g_\delta-\hat g\|_{C^0(M);\hat g_{\delta_0}}\le \frac{\epsilon(n)}{2},
\forall \delta\in(0,\delta_0].
\end{align}

Thus we can let $h=\hat g_{\delta_0}$, and then $h$ is $(1+\frac{\epsilon(n)}{2})$-fair to all of $\hat g_\delta(0<\delta\le \delta_0)$. For convenience, $\|\cdot\|_{C^0(M)}$ and $\|\cdot\|_{W^{k,q}(M)}$ will denote the $C^0$ norm and the $W^{k,q}$ norm respectively for tensors on $M$ taken with repsect to $h$.

Since $h$ is $(1+\frac{\epsilon(n)}{2})$-fair to all of $\hat g_\delta(0<\delta\le \delta_0)$, for each smooth metric $\hat g_\delta$ we consider the Ricci flow $g_\delta(t)$  given in Lemma \ref{mainsec6} with initial metric $\hat g_\delta$. I is known that by letting $\delta$ converge to $0^+$, $g_\delta(t)(0\in (0,T_0])$ converge to a Ricci flow $g(t)(0\in (0,T_0])$ such that $\lim_{t\to 0}d_{GH}((M,g(t)),(M,\hat g))=0$, where $d_{GH}$ is the Gromov-Hausdorff distance.

For any $T\in (0,T_0]$ and any nonnegative $\tilde \varphi\in C^\infty(M)$, we will prove
\begin{align}
\int_M \left(R_{g(T)}-a\left(1-\frac{2a}{n}t\right)^{-1}\right)\tilde \varphi d\mu_{g(T)}\ge0,
\end{align}
which is sufficent to give $R_{g(t)}\ge a\left(1-\frac{2a}{n}t\right)^{-1}$ pointwisely on $M$.

To do this, for each $\delta\in (0,\delta_0]$ we consider the auxilliary functions $\varphi_\delta$ given in Lemma \ref{prop_varphit}, such that 
\begin{enumerate}
	\item $\varphi_\delta(\cdot,T)=\tilde \varphi(\cdot),\  {\text{on}}\ M$.
	\item For any constant $a$, $\int_M \left(R_{g_\delta(t)} -a\left(1-\frac{2a}{n}t\right)^{-1}\right) \varphi_\delta(\cdot,t)d\mu_{g_\delta(t)}$ is monotonously increasing with respect to $t$ (if $a>0$, then we require $t\le T\le T_0<\frac{n}{2a}$).
	\item $\varphi_\delta(\cdot,t)\le C(n,h,p,\|\hat g_\delta\|_{W^{1,p}(M)},\|\tilde\varphi\|_{L^\infty},a)$, $\forall t\in[0,T].$
	\item  $\|\varphi_\delta(\cdot,t)\|_{W^{1,\frac{n}{n-1}}(M)}\le C(n,h,p,\|\hat g_\delta\|_{W^{1,p}(M)},\|\tilde\varphi\|_{L^\infty},a)$,  $\forall t\in[0,T].$
\end{enumerate}

By \eqref{w1ph1}, we have
\begin{align}\label{unifm}
\|\hat g_\delta\|_{W^{1,p}(M)}\le 2\|\hat g\|_{W^{1,p}(M)}.
\end{align}

Recall $h=\hat g_{\delta_0}$ only depends  on $\hat g$. Thus by \eqref{unifm}, the  estimate above could be uniformized as
 \begin{align}\label{liftphi}
 \varphi_\delta(\cdot,t)\le C(n,p,\hat g,\tilde\varphi,a), \forall t\in[0,T],
 \end{align}
 and
 \begin{align}\label{liftphi1}
 \|\varphi_\delta(\cdot,t)\|_{W^{1,\frac{n}{n-1}}(M)}\le C(n,p,\hat g,\tilde\varphi,a), \forall t\in[0,T].
 \end{align}

 Let us estimate the integral
 \begin{align}
 \int_M \left(R_{g_\delta(t)} -a\left(1-\frac{2a}{n}t\right)^{-1}\right) \varphi_\delta(\cdot,t)d\mu_{g_\delta(t)},
 \end{align}
 firstly.

 By the monotonicity we have
 \begin{align}\label{mnt1''}
 \int_M \left(R_{g_\delta(t)} -a\left(1-\frac{2a}{n}t\right)^{-1}\right) \varphi_\delta(\cdot,t)d\mu_{g_\delta(t)}\ge  \int_M \left(R_{\hat g_\delta} -a\right) \varphi_\delta(\cdot,0)d\mu_{\hat g_\delta}.
 \end{align}

To estimate $\int_M \left(R_{\hat g_\delta} -a\right) \varphi_\delta(\cdot,0)d\mu_{\hat g_\delta}$, by Lemma \ref{lm2.2}, we have
\begin{align}\label{d1}
\left|\int_M R_{\hat g_\delta}\varphi_\delta(\cdot,0) d\mu_{\hat g_\delta}-\langle R_{\hat g},\varphi_\delta(\cdot,0) \rangle\right|\le \Psi(\delta|\hat g) \|\varphi_\delta(\cdot,0) \|_{W^{1,\frac{n}{n-1}}(M)}, \forall \varphi\in C^\infty(M),
\end{align}
where $\Psi(\delta|\hat g)$ is a positive function such that $\lim_{\delta\to0^+}\Psi(\delta|\hat g)=0$ for any fixed $\hat g$, and $\Psi(\delta|\hat g)$ varies from line to line.

Moreover, by Sobolev embedding we have $\lim_{\delta\to0^+}\left\|\frac{d\mu_{\hat g_\delta}}{d\mu_{\hat g}}-1\right\|_{C^0(M)}=0$, thus by H\"older inequality, we have
\begin{align}\label{d2}
\left|\int_M\varphi_\delta(\cdot,0) d\mu_{\hat g_\delta}-\int_M\varphi_\delta(\cdot,0) d\mu_{\hat g}\right|
&= \left|\int_M\varphi_\delta(\cdot,0) \left(\frac{d\mu_{\hat g_\delta}}{d\mu_{\hat g}}-1 \right)d\mu_{\hat g}\right|\notag\\
&\le \left\|\frac{d\mu_{\hat g_\delta}}{d\mu_{\hat g}}-1\right\|_{C^0(M)}\int_M|\varphi_\delta(\cdot,0)| d\mu_{\hat g}
\notag\\
&\le  C(n,\hat g)\left\|\frac{d\mu_{\hat g_\delta}}{d\mu_{\hat g}}-1\right\|_{C^0(M)}\|\varphi_\delta(\cdot,0)\|_{W^{1,\frac{n}{n-1}}(M)}\notag\\
&\le  \Psi(\delta|\hat g)\|\varphi_\delta(\cdot,0)\|_{W^{1,\frac{n}{n-1}}(M)}.
\end{align}

By triangular inequality, 
\begin{align}\label{d3'}
&\left|\int_M (R_{\hat g_\delta}-a)\varphi_\delta(\cdot,0) d\mu_{\hat g_\delta}-\langle R_{\hat g}-a,\varphi_\delta(\cdot,0)\rangle\right|\notag\\
\le& \left|\int_M R_{\hat g_\delta}\varphi_\delta(\cdot,0) d\mu_{\hat g_\delta}-\langle R_{\hat g},\varphi_\delta(\cdot,0)\rangle\right|+|a|\left|\int_M\varphi_\delta(\cdot,0) d\mu_{\hat g_\delta}-\int_M\varphi_\delta(\cdot,0) d\mu_{\hat g}\right|.
\end{align}

Combining  \eqref{d1}-\eqref{d3'}, we have
\begin{align}\label{d3}
&\left|\int_M (R_{\hat g_\delta}-a)\varphi_\delta(\cdot,0) d\mu_{\hat g_\delta}-\langle R_{\hat g}-a,\varphi_\delta(\cdot,0)\rangle\right|\notag\\
\le& \Psi(\delta|\hat g) \|\varphi_\delta(\cdot,0)\|_{W^{1,\frac{n}{n-1}}(M)}, \forall \tilde\varphi\in C^\infty(M),\tilde\varphi\ge0\forall\delta\in(0,\delta_0].
\end{align}

Since we have assumed $R_{\hat g}\ge a$ in distributional sense, we have 
\begin{equation}\label{d}
\langle R_{\hat g}-a,\varphi_\delta(\cdot,0)\rangle \ge 0, \forall \varphi_\delta(\cdot,0)\in C^\infty(M),\varphi_\delta(\cdot,0)\ge0.
\end{equation}

Combining \eqref{mnt1''}, \eqref{d3} and \eqref{d}, we have
\begin{align}\label{aaa}
\int_M \left(R_{g_\delta(t)} -a\left(1-\frac{2a}{n}t\right)^{-1}\right) \varphi_\delta(\cdot,t)d\mu_{g_\delta(t)}\ge \int_M (R_{\hat g_\delta}-a)\varphi_\delta(\cdot,0) d\mu_{\hat g_\delta}\ge -\Psi(\delta|\hat g) \|\varphi_\delta(\cdot,0)\|_{W^{1,\frac{n}{n-1}}(M)}.
\end{align}

By \eqref{liftphi1} and \eqref{aaa}, we have
\begin{align}\label{aaa1}
\int_M \left(R_{g_\delta(t)} -a\left(1-\frac{2a}{n}t\right)^{-1}\right) \varphi_\delta(\cdot,t)d\mu_{g_\delta(t)}
&\ge -C(n,p,\hat g,\tilde\varphi,a)\Psi(\delta|\hat g)\notag\\
&\ge  -\Psi(\delta|n,p,\hat g,\tilde\varphi,a), \forall t\in [0,T],\delta\in (0,\delta_0].
\end{align}

In particular, letting $t=T$ in \eqref{aaa1}, we have
\begin{align}\label{aaa111}
\int_M \left(R_{g_\delta(T)} -a\left(1-\frac{2a}{n}T\right)^{-1}\right) \tilde \varphi d\mu_{g_\delta(T)}\ge  -\Psi(\delta|n,p,\hat g,\tilde\varphi,a), \forall \delta\in (0,\delta_0].
\end{align}

where $\Psi(\delta|n,p,\hat g,\tilde\varphi,a)$ denotes a positive function such that $\lim_{\delta\to0^+}\Psi(\delta|n,p,\hat g,\tilde\varphi,a)=0$ for any fixed $n,p,\hat g,\tilde\varphi$ and $a.$

By Simon's estimate, Lemma \ref{thm2.2}, as $\delta$ tends to $0$, $g_\delta(T)$ smoothly converges to $g(T)$. Thus taking limit in \eqref{aaa111}, we have
\begin{align}\label{finalR}
\int_M (R_{g(T)}-a\left(1-\frac{2a}{n}T\right)^{-1})\tilde\varphi d\mu_{g(T)} \ge 0,\forall T\in(0,T_0],\forall \tilde\varphi \in C^\infty(M),\tilde\varphi\ge0.
\end{align}

Recall that $g(t)$ is a smooth metric for $t\in(0, T_0]$ and $R_{g(t)}$ is well defined pointwisely on $M$, thus by \eqref{finalR} we have $R_{g(t)}\ge a\left(1-\frac{2a}{n}t\right)^{-1}$ pointwisely on $M$ for any $t\in (0,T_0]$, which completes the proof of the theorem.
\end{proof}

\section{proof of Theorem \ref{thmY2}}
In this section, we prove Theorem \ref{thmY2}.  Let us restate it as follows: 

\begin{theorem}\label{thmY2'}
Let $M^n$ be a compact manifold with $\sigma(M)\le0$ and $\hat g$ be a  $W^{1,p}(n< p\le \infty)$ metric on $M$ with unit volume  such that $R_{\hat g}\ge \sigma(M)$ in distributional sense. Then $(M,\hat g)$ is isometric to an Einstein manifold with scalar curvature equaling to $\sigma(M)$.
\end{theorem}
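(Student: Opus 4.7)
The strategy is to smooth $\hat g$ by the Ricci flow, transfer the distributional scalar curvature lower bound to a pointwise bound via Lemma~\ref{mthm2}, rescale each time slice so that it has unit volume and satisfies the pointwise hypothesis of the classical smooth rigidity Theorem~\ref{thmY222}, and finally pass back to $t=0$ using the fact that a Ricci flow whose time slices are all Einstein must be self-similar (a pure time rescaling). The Yamabe invariant $\sigma(M)$ is a topological invariant of $M$ and so is unaffected by the rescalings.

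\textbf{Scalar curvature and volume along the flow; smooth rigidity.} Let $g(t)$, $t\in(0,T_{0}]$, be the Ricci flow from $\hat g$ given by Lemma~\ref{mainsec6}. Applying Lemma~\ref{mthm2} with $a=\sigma(M)$ yields
\[
R_{g(t)}\ \ge\ \sigma(M)\Bigl(1-\tfrac{2\sigma(M)}{n}t\Bigr)^{-1}\ \ge\ \sigma(M)\quad\text{on }M,
\]
the second inequality using $\sigma(M)\le 0$. Plugging this into the standard evolution $V'(t)=-\int_{M}R_{g(t)}\,d\mu_{g(t)}$ (with $V(0)=1$) and integrating the resulting differential inequality for $\log V$ gives $V(t)\le(1-\tfrac{2\sigma(M)}{n}t)^{n/2}$. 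Now set $\tilde g(t):=V(t)^{-2/n}g(t)$; this is smooth, has unit volume, and by the scaling law for scalar curvature combined with the two bounds above together with $\sigma(M)\le 0$ satisfies
\[
R_{\tilde g(t)}\ =\ V(t)^{2/n}R_{g(t)}\ \ge\ V(t)^{2/n}\sigma(M)\Bigl(1-\tfrac{2\sigma(M)}{n}t\Bigr)^{-1}\ \ge\ \sigma(M).
\]
Theorem~\ref{thmY222} then applies to the smooth unit-volume metric $\tilde g(t)$ and forces it to be Einstein with $R_{\tilde g(t)}=\sigma(M)$.

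\textbf{Self-similarity and the limit $t\to 0^{+}$.} Because each $\tilde g(t)$ is Einstein, the Ricci flow equation reduces to a pure time-dependent rescaling: $\partial_{t}g=-2\Ric=-2\lambda(t)g$ with $\lambda(t)=R_{g(t)}/n$, whose solution is $g(t)=c(t)g(t_{0})$ for some positive scalar $c(t)$. Plugging this into $\tilde g(t)=V(t)^{-2/n}g(t)$ shows that $\tilde g(t)$ is in fact independent of $t$; call this fixed smooth Einstein metric $g^{*}$. Hence $g(t)=V(t)^{2/n}g^{*}$, and as $t\to 0^{+}$, $V(t)\to 1$, so $g(t)\to g^{*}$ smoothly on $M$. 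The $h$-flow $\bar g(t)$ of Lemma~\ref{thm6.2} differs from $g(t)$ by a family of diffeomorphisms and converges in $C^{0}$ to $\hat g$ by Lemma~\ref{thm2.2}, so taking the limit of these diffeomorphisms should yield an isometry between $\hat g$ and $g^{*}$, completing the proof.

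\textbf{Main obstacle.} The delicate step is the final one: extracting a genuine isometry between the low-regularity metric $\hat g$ and the smooth Einstein metric $g^{*}$ from the $C^{0}$ convergence of the $h$-flow together with the smooth convergence of the Ricci flow. One needs to control the DeTurck-type family of diffeomorphisms $\phi_{t}$ that converts the Ricci flow $g(t)$ into the $h$-flow $\bar g(t)$, show that $\phi_{t}$ admits a $C^{0}$ limit $\phi_{0}$ as $t\to 0^{+}$, and verify that $\phi_{0}$ is a measure-preserving isometry from $(M,\hat g)$ onto $(M,g^{*})$. This technical difficulty is presumably what motivates the authors' use of the modified flow of \cite{LT21}, which is designed to preserve unit volume along the evolution and thereby short-circuit the rescaling and diffeomorphism bookkeeping that complicate the direct Ricci flow approach outlined above.
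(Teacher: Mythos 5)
Your proposal follows essentially the same route as the paper: apply Lemma~\ref{mthm2} with $a=\sigma(M)$, derive the volume upper bound $\Vol(M,g(t))\le(1-\tfrac{2\sigma(M)}{n}t)^{n/2}$, rescale to the unit-volume metric $\tilde g(t)=\Vol(M,g(t))^{-2/n}g(t)$ (this \emph{is} the ``modified flow'' of \cite{LT21} that the paper uses, so there is no separate short-circuit to be had there), conclude $R_{\tilde g(t)}\ge\sigma(M)$, invoke Theorem~\ref{thmY222}, and deduce self-similarity and hence $t$-independence of $\tilde g(t)$. The one place where you leave a gap --- extracting an isometry between $(M,\hat g)$ and the fixed Einstein metric $g^{*}$ --- does not actually require controlling the DeTurck diffeomorphisms. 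The conclusion of the theorem is only that $(M,\hat g)$ is \emph{isometric} to an Einstein manifold (as metric spaces), and Lemma~\ref{mainsec6}(1) already provides $\lim_{t\to0}d_{GH}((M,g(t)),(M,\hat g))=0$; since $\tilde g(t)=(1-\tfrac{2\sigma(M)}{n}t)^{-1}g(t)$ with scaling factor tending to $1$, this gives $d_{GH}((M,\tilde g(t)),(M,\hat g))=0$ for the $t$-independent metric $\tilde g(t)=g^{*}$, and vanishing Gromov--Hausdorff distance between compact metric spaces yields an isometry. This is exactly how the paper closes the argument, so your ``main obstacle'' is already resolved by the tools you cite.
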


For any smooth mainifold $M$, its Yamabe invariant $\sigma(M)$ is defined as:
\begin{align}\label{dnfsigma}
\sigma(M):=\sup_{\mathcal{C}}\inf_{g\in \mathcal{C}}\frac{\int_M R_gd\mu_g}{(\Vol(M,g))^{(n-2)/2}},
\end{align}
where $\mathcal{C}$ is the set consists of every conformal class of Riemannian metrics on $M$, $R_{g}$ is the scalar curvature of $g$, and $\Vol(M,g)$ is the volume of $(M,g)$.

Roughly speaking, the basic idea of proving Themrem \ref{thmY2'} is to flow the initial metric $\hat g$. The flow at positive time are smooth, thus we can prove  they are Einstein by using Theorem \ref{thmY222}.

\begin{proof}

\textbf{Step A: we consider a normalized Ricci flow $\mathring g(t)$ which keeps unit volume and prove that $R_{\mathring g(t)}\ge \sigma(M)$.}

In order to apply Theorem \ref{thmY222} to a flow, the volume along the flow should be ketp unit. Therefore we consider such a normalized Ricci flow (see \cite{LT21}):
\begin{align}\label{ring g}
\mathring g(t)=\left(\Vol(M,g(t))\right)^{-2/n}g(t), t\in (0,T_0],
\end{align}
where $g(t)(t\in (0,T_0])$ is the Ricci flow with initial metric $\hat g$ (see Lemma \ref{mainsec6}).

Then by a standard calculation, we have
\begin{align}\label{unitvol}
\Vol(M,\mathring g(t))\equiv 1, t\in (0,T_0],
\end{align}
and 
\begin{align}\label{RR1}
R_{\mathring g(t)}=\left(\Vol(M,g(t))\right)^{2/n}R_{g(t)}, t\in (0,T_0].
\end{align}

We want to prove $R_{\mathring g(t)}\ge \sigma(M)$. To do this, recall that we have assumed $R_{\hat g}\ge \sigma(M)$ in distributional sense, by Lemma \ref{mthm2} we have
\begin{align}\label{Rgtg}
R_{g(t)}\ge \sigma(M)\left(1-\frac{2\sigma(M)}{n}t\right)^{-1},
\end{align} 
pointwisely on $M$.

Thus we need to compare $\left(\Vol(M,g(t))\right)^{2/n}$  with $\left(1-\frac{2\sigma(M)}{n}t\right)$.

By \eqref{mnt4}, we calculate that
\begin{align}\label{Rgtg1}
\frac{d}{dt}\Vol(M,g(t))&=\frac{d}{dt}\int_Md\mu_{g(t)}\notag\\
&=\int_M(-R_{g(t)})d\mu_{g(t)}.
\end{align}

Combining \eqref{Rgtg} and \eqref{Rgtg1}, we have
\begin{align}\label{Rgtg2}
\frac{d}{dt}\Vol(M,g(t))
&\le -\int_M \sigma(M)\left(1-\frac{2\sigma(M)}{n}t\right)^{-1}d\mu_{g(t)}\notag\\
&=-\sigma(M)\left(1-\frac{2\sigma(M)}{n}t\right)^{-1}\Vol(M,g(t)).
\end{align}

Then we have
\begin{align}
\frac{d}{dt}\log\Vol(M,g(t)\le -\sigma(M)\left(1-\frac{2\sigma(M)}{n}t\right)^{-1}.
\end{align}

Taking integral on $(0,t)$ we have
\begin{align}
\log\Vol(M,g(t)-\log\Vol(M,\hat g)&\le -\sigma(M)\frac{n}{2\sigma(M)}\log\left(1-\frac{2\sigma(M)}{n}t\right)\notag\\
&=\frac{n}{2}\log\left(1-\frac{2\sigma(M)}{n}t\right).
\end{align}

Recall that we have assumed $\hat g$ with unit volume, thus $\log\Vol(M,\hat g)=0$, and we have
\begin{align}\label{RR2}
\Vol(M,g(t))\le \left(1-\frac{2\sigma(M)}{n}t\right)^{n/2}.
\end{align}

Combining \eqref{RR1}, \eqref{Rgtg} and \eqref{RR2}, we have
\begin{align}\label{ring R}
R_{\mathring g(t)}\ge \sigma(M),
\end{align}
which completes the step A.

\textbf{Step B: we prove the normalized Ricci flow $\mathring g(t)$ is independent of $t$.}

Since $\mathring g(t)$ is a smooth Riemannian metric on $M$, and $\sigma(M)$ is nonpositive, by the classical theorem, Theorem \ref{thmY222}, we have that $\mathring g(t)$ is Einstein with $\Ric_{\mathring g(t)}=\frac{\sigma(M)}{n}\mathring g(t)$.

Thus by \eqref{ring g}, we have
\begin{align}\label{ring g1}
\Ric_{g(t)}=\Ric_{\mathring g(t)}=\frac{\sigma(M)}{n}\mathring g(t).
\end{align}

On the other hand, by \eqref{ring g} we have
\begin{align}\label{ring g2}
\frac{\partial}{\partial t}\mathring g(t)&=-\frac{2}{n}\left(\Vol(M,g(t))\right)^{-2/n-1}\frac{d}{dt}\Vol(M,g(t))g(t)+\frac{\partial}{\partial t}  g(t)\notag\\
&=-\frac{2}{n}\left(\Vol(M,g(t))\right)^{-2/n-1}\frac{d}{dt}\Vol(M,g(t))g(t)-2\Ric_{g(t)}.
\end{align}

By \eqref{ring g1} and \eqref{ring g2}, we have
\begin{align}\label{ring g33}
\frac{\partial}{\partial t}\mathring g(t)
&=-\frac{2}{n}\left(\Vol(M,g(t))\right)^{-2/n-1}\frac{d}{dt}\Vol(M,g(t))g(t)-\frac{2\sigma(M)}{n}\mathring g(t)\notag\\
&=f(t)g(t),
\end{align}
where $f(t)=-\frac{2}{n}\left(\left(\Vol(M,g(t))\right)^{-2/n-1}\frac{d}{dt}\Vol(M,g(t))+\sigma(M)\right)$ is a constant on $M$ for any fixed $t\in(0,T_0]$.

Thus $\mathring g(t)$ must be self-similar, that is, we have
\begin{align}
\mathring g(t_1)=F(t_1,t_2)\mathring g(t_2),\ \forall t_1,t_2\in(0,T_0],
\end{align}
where $F$ is a constant on $M$  depends only on $t_1,t_2$.

Thus their Ricci curvature satisfies
\begin{align}\label{ring g3}
\Ric_{\mathring g(t_1)}=\Ric_{\mathring g(t_2)}=\frac{\sigma(M)}{n}\mathring g(t).
\end{align}

By \eqref{ring g1} and \eqref{ring g3}, we have 
\begin{align}\label{ring g4}
\frac{\sigma(M)}{n}\mathring g(t_1)=\Ric_{\mathring g(t_1)}=\Ric_{\mathring g(t_2)}=\frac{\sigma(M)}{n}\mathring g(t_2),
\end{align}
which gives
\begin{align}\label{ring g5}
\mathring g(t_1)=\mathring g(t_2),\ \forall t_1,t_2\in(0,T_0],
\end{align}
which completes the step B.

\textbf{Step C: we prove the initial metric is isometric to an Einstein manifold with scalar curvature $\sigma(M)$.}

Thus by \eqref{ring g33} and \eqref{ring g5}, the function $f(t)$ in \eqref{ring g33} is identically zero on $M\times (0,T_0]$.

Recall $f(t)=-\frac{2}{n}\left(\left(\Vol(M,g(t))\right)^{-2/n-1}\frac{d}{dt}\Vol(M,g(t))+\sigma(M)\right)$, thus $f\equiv0$ gives an ODE:
 \begin{align}\label{ODEgt}
\left(\Vol(M,g(t))\right)^{-2/n-1}\frac{d}{dt}\Vol(M,g(t))+\sigma(M)=0,\ t\in(0,T_0].
\end{align}

Since we have assumed that $\hat g$ has unit volume, by Lemma \ref{mainsec6} (1), we have  
\begin{align}\label{ODEgt1} 
\lim_{t\to 0^+}\Vol(M,g(t))=\Vol(M,\hat g)=1.
\end{align}

Solving the ODE \eqref{ODEgt} and \eqref{ODEgt1}, we have
\begin{align}\label{RR3}
\Vol(M,g(t))=\left(1-\frac{2\sigma(M)}{n}t\right)^{n/2}.
\end{align}

By \eqref{ring g}, we have
\begin{align}\label{RR4}
\mathring  g(t)=\left(1-\frac{2\sigma(M)}{n}t\right)^{-1}g(t).
\end{align}

Since Lemma \ref{mainsec6} (1) tells that
\begin{align}\label{RR5}
\lim_{t\to 0}d_{GH}((M,g(t)),(M,\hat g))=0.
\end{align}

By \eqref{RR4} and \eqref{RR5} we have $\mathring g(t)$ converges to
\begin{align}\label{RR6}
\lim_{t\to 0}d_{GH}((M,\mathring g(t)),(M,\hat g))=0.
\end{align}

However, by \eqref{ring g5}, $\mathring g(t)$ does not depend on $t$, thus \eqref{RR6} gives
\begin{align}\label{RR66}
d_{GH}((M,\mathring g(t)),(M,\hat g))=0,\ \forall t\in(0,T_0].
\end{align}

Thus by \eqref{ring g3} and \eqref{RR66}, we have that $\hat g$ is isometric to an Einstein manifold with scalar curvature $\sigma(M)$, which completes the proof of the theorem.
\end{proof}
\section*{Acknowledgements}
The work is supported by  National Key R\&D Program of China: 2022YFA1005500.

\bibliographystyle{plain}

\end{document}